\newcommand{\ra}{\rightarrow}
\newcommand{\lra}{\longrightarrow}
\newcommand{\Lby}[1]{\stackrel{#1}{-\!\!-\!\!\!\lra}}
\newcommand{\ol}{\overline}		\newcommand{\wt}{\widetilde}
\newtheorem{theorem}{Theorem}[section]
\newtheorem{proposition}[theorem]{Proposition}
\newtheorem{lemma}[theorem]{Lemma}
\newcommand{\gt}{\theta}
	\newcommand{\BF}{\mbox{$\mathbb F$}}
\newcommand{\BQ}{\mbox{$\mathbb Q$}}
	\newcommand{\BZ}{\mbox{$\mathbb Z$}}
\newcommand{\op}{\mbox{$\oplus$}}	
	\newcommand{\Hom}{\mbox{\rm Hom}}
\newcommand{\Um}{\mbox{\rm Um}}		
\newcommand{\GL}{\mbox{\rm GL}}		
\newcommand{\Aut}{\mbox{\rm Aut}}
\begin{document} 

\begin{center}
{\Large \bf A note on cancellation of projective modules}
\\
\vspace{.2in} {\large Alpesh M. Dhorajia and Manoj K. Keshari       }\\
\vspace{.1in} {\small Department of Mathematics, IIT Mumbai, Mumbai -
400076, India \\ (alpesh,keshari)@math.iitb.ac.in}
\end{center}

\section{Introduction}

Let $A$ be a ring of dimension $d$ and let $P$ be a projective
$A$-module of rank $d$. Assume that if $R$ is a finite extension of
$A$ then $R^d$ is cancellative. Then it is proved in (\cite{K},
Theorem 3.6) that $P$ is also cancellative. In other words, if
$\GL_{d+1}(R)$ acts transitively on $\Um_{d+1}(R)$ for every finite
extension $R$ of $A$, then $\Aut (A\op P)$ acts transitively on
$\Um(A\op P)$.

We will generalize  the above result as follows (\ref{c2}). 

\begin{theorem}\label{m}
Let $A$ be a ring of dimension $d$ and let $P$ be a projective
$A$-module of rank $d$. Assume that if $R$ is a finite extension of
$A$ then $E_{d+1}(R)$ acts transitively on $\Um_{d+1}(R)$. Then
$E(A\op P)$ acts transitively on $\Um(A\op P)$. 
\end{theorem}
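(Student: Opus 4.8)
The goal is to prove that $E(A\op P)$ acts transitively on $\Um(A\op P)$; equivalently, that every $v\in\Um(A\op P)$ can be carried to the base point $(1,0)$ by a product of transvections. Recall that $E(A\op P)$ is generated by the elementary transvections $\gf^{+}\colon(a,q)\mapsto(a+\gf(q),q)$ with $\gf\in\Hom(P,A)$, and $q_0^{+}\colon(a,q)\mapsto(a,q+aq_0)$ with $q_0\in P$. The first reduction I would make is to assume $A$ reduced: the parametrizing objects $\Hom(P,A)$ and $P$ surject onto their counterparts over $A_{red}$, so transvections lift, and a unimodular element congruent to $(1,0)$ modulo the nilradical is readily seen to be $E(A\op P)$-equivalent to $(1,0)$. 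Hence it suffices to solve the problem over $A_{red}$.

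The core of the argument is to reduce questions about the projective module $P$ to statements about honest unimodular rows, so that the transitivity hypothesis on $\Um_{d+1}$ applies. Here I would follow the proof of (\cite{K}, Theorem~3.6) — which, from the weaker assumption that $\GL_{d+1}(R)$ acts transitively on $\Um_{d+1}(R)$ for every finite extension $R$ of $A$, already deduces transitivity of $\Aut(A\op P)$ on $\Um(A\op P)$ — and upgrade every step so that the automorphisms it constructs are \emph{elementary}. Locally $P$ is free, so after inverting a suitable $s\in A$ with $P_s$ free one has $(A\op P)_s\cong A_s^{\,d+1}$, identifying $E((A\op P)_s)$ with $E_{d+1}(A_s)$; this lets me recast the obstruction to moving $v$ toward $(1,0)$ as a statement about a genuine unimodular row, and a Quillen--Suslin local--global principle for the transvection group $E(A\op P)$ then lets me reassemble local elementary trivializations into a global one.

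The finite extensions $R$ of $A$ are precisely what the non-freeness of $P$ and the twist $\det P$ force into the argument, and they carry the real transitivity input. At the stage where the row extracted from $v$ must be normalized, one passes to an integral extension $R$ of $A$ (still of dimension $d$) over which the relevant rank-one obstruction is trivialized, invokes the hypothesis that $E_{d+1}(R)$ acts transitively on $\Um_{d+1}(R)$ to move that row to $(1,0,\dots,0)$, and descends. Because the hypothesis is postulated for \emph{all} finite extensions of $A$ simultaneously, it is available at whichever extension the construction produces.

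The step I expect to be the main obstacle is the bookkeeping of elementariness. The proof of (\cite{K}, 3.6) needs its changes of basis only to be automorphisms, whereas here each one must be certified to lie in $E(A\op P)$ rather than merely in $\Aut(A\op P)$. Securing this requires three things: replacing every appeal to transitivity of $\GL_{d+1}(R)$ by the given transitivity of $E_{d+1}(R)$; using that products of transvections remain in $E(A\op P)$ and that this subgroup is well behaved under the base changes involved; and, above all, the local--global principle that converts local elementariness into global elementariness. Checking that both the patching and the descent along the finite extension preserve membership in $E(A\op P)$, and not just in $\Aut(A\op P)$, is the delicate heart of the proof.
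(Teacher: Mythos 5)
Your outline has the right raw ingredients (reduce to $A$ reduced, choose Lindel's $s$ with $P_s$ free as in (\ref{3.1}), convert the problem into one about unimodular rows, feed the hypothesis into some finite extension of $A$), but the one concrete mechanism you propose for assembling these pieces is not the one that works, and the step you yourself flag as ``the delicate heart'' is exactly the part you leave unsupplied. You propose to trivialize $v$ locally over $A_s$, where $E((A\op P)_s)\cong E_{d+1}(A_s)$, and then invoke ``a Quillen--Suslin local--global principle for the transvection group'' to globalize. Local--global principles for transvection groups (Bak--Basu--Rao) patch data varying along a polynomial parameter over all localizations at maximal ideals; they are not a device for gluing a single localization $A_s$ against the quotient $A/s^2A$, and no such principle is used in the paper. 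The actual argument splits the problem differently: first, modulo $s^2A$ the dimension drops to $d-1$, so Bass's theorem (\ref{bass}) together with the lifting remark (\ref{rem}) moves $(a,p)$ into $\Um(A\op P,s^2A)$ by an element of $E(A\op P)$ --- no transitivity hypothesis is needed for this half. Second, for an element of $\Um(A\op P,s^2A)$, Lindel's presentation $sP\subset p_1A+\dots+p_rA$ writes it as a row $(a,f_1,\dots,f_r)\in \Um_{r+1}(A,s^2A)$, and one patches this row with $(1,0,\dots,0)$ over $A/s^2A$ to get a unimodular row over the double $R=A\times_{A/s^2A}A\cong A[X]/(X^2-s^2X)$. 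This $R$ is the finite extension to which the hypothesis is applied; it arises from Milnor patching, not (as you suggest) from killing a rank-one or $\det P$ obstruction.

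Even granting that, the residue of the argument is the passage from an elementary matrix in $E_{r+1}(A,s^2A)$ (obtained by projecting the two components of the solution over $R$ and taking their ``difference'') back to an automorphism in $E(A\op P)$. This requires the relative-group inclusions $E_{r+1}(A,I^2)\subset E_{r+1}(I)\subset E^1_{r+1}(A,I)$ of (\ref{P1}) and (\ref{P2}) applied with $I=sA$, followed by (\ref{l1}), which converts a matrix in $E^1_{r+1}(A,sA)$ acting on $(a,f_1,\dots,f_r)$ into a genuine element of $E(A\op P)$ acting on $(a,p)$. You correctly identify that certifying elementariness is the main obstacle, but naming the obstacle is not the same as overcoming it: without the double-ring construction that pins down \emph{which} finite extension carries the hypothesis, and without the relative elementary calculus that transfers the answer back to $A\op P$, the proof is not there.
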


If $A$ is an affine algebra of dimension $d$ over $\BZ$ then
Vaserstein \cite{V} proved that $E_{d+1}(A)$ acts transitively on
$\Um_{d+1}(A)$. As a consequence of (\ref{m}) we get another proof of
the following result of Mohan Kumar, Murthy and Roy (\cite{MMR},
Theorem 2.4) that if $P$ is a projective $A$-module of rank $d$, then
$E(A\op P)$ acts transitively on $\Um(A\op P)$.

Let $A$ be a smooth affine algebra of dimension $d$ over an
algebraically closed field $\ol k$. Assume that if characteristic of
$\ol k$ is $p>0$, then $p\geq d$. Recently Fasel, Rao and Swan
(\cite{frs}, Theorem 7.3) proved that stably free $A$-modules of rank
$d-1$ are free, thus answering an old question of Suslin. Infact if
$d\geq 4$, then they proved that $A$ being normal suffices. In view of
their result, a natural question arises: Let $P$ be a projective
$A$-module of rank $d-1$. Is $P$ cancellative? We answer this question
in affirmative when $\ol k=\ol \BF_p$. More precisely, we prove the
following result (\ref{c3}).

\begin{theorem}\label{m2}
Let $A$ be an affine algebra of dimension $d\geq 4$ over $\ol \BF_p$,
where $p\geq d$. Then every projective $A$-module of rank $d-1$ is
cancellative.
\end{theorem}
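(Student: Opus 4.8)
The plan is to convert the cancellation statement into a transitivity statement for unimodular elements and then to run the machinery behind Theorem \ref{m} one index lower. Recall the standard equivalence: a projective $A$-module $P$ of rank $d-1$ is cancellative if and only if $\Aut(A\oplus P)$ acts transitively on $\Um(A\oplus P)$. Since $A\oplus P$ has rank $d=\dim A$, this lies two steps below the range in which Bass' cancellation theorem applies, and one step below the rank-equals-dimension case that Theorem \ref{m} settles; so genuinely new input is required. As usual I would aim for the stronger conclusion that $E(A\oplus P)$ already acts transitively, since that is the form the local--global arguments produce.

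The engine is the same Quillen--Suslin patching together with the Bass--Suslin reduction of the number of generators that underlies Theorem \ref{m}, but applied to the rank-$d$ module $A\oplus P$ rather than to a rank-$(d+1)$ module. Carried out in this lowered range, it should reduce transitivity of $E(A\oplus P)$ on $\Um(A\oplus P)$ to the following hypothesis: for every finite extension $R$ of $A$, the group $E_d(R)$ acts transitively on $\Um_d(R)$. This is exactly the analogue of the hypothesis of Theorem \ref{m}, with the rows $\Um_{d+1}$ replaced by the shorter rows $\Um_d$. Thus everything comes down to verifying, for each finite extension $R$ of $A$, that every unimodular row of length $d$ over $R$ is elementarily completable.

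To verify this I would use that $A$ is affine over $\ol\BF_p$, so each finite extension $R$ is again an affine $\ol\BF_p$-algebra of dimension at most $d$. When $\dim R<d$ the row $\Um_d(R)$ is long relative to the dimension and transitivity is given by the stable-range theorems of Bass and Suslin. When $\dim R=d$ and $R$ is normal, the Fasel--Rao--Swan theorem (the normal case, valid because $d\geq 4$ and $p\geq d$) asserts that stably free $R$-modules of rank $d-1$ are free, which is precisely the statement that $\GL_d(R)$ acts transitively on $\Um_d(R)$. Two upgrades then remain: from $\GL_d$- to $E_d$-transitivity, and from normal $R$ to an arbitrary finite extension. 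For the first, the difference between the $\GL_d$- and $E_d$-orbits of $e_1$ is governed by units and by $SK_1(R)$; over $\ol\BF_p=\bigcup_n\BF_{p^n}$ these invariants are torsion and divisible, and together with van der Kallen's abelian group structure on $\Um_d(R)/E_d(R)$ --- available precisely because $d\geq 4$ --- this should force the obstruction to vanish. For the second, I would pass to the normalization $\wt R$, which is finite and normal over $R$, apply the normal case there, and descend along $R\to\wt R$ by a conductor-square patching argument, again using the divisibility available over $\ol\BF_p$; it is this descent, rather than Fasel--Rao--Swan itself, that forces the restriction from an arbitrary algebraically closed field to $\ol\BF_p$.

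The main obstacle, I expect, is the index shift taken as a whole: running the patching and the generator-lowering one notch below the stable range, where cancellation genuinely fails for general rings, while simultaneously arranging the $\GL_d\to E_d$ and non-normal-to-normal upgrades so that the precise hypothesis demanded by the lowered machine holds uniformly over all finite extensions at once. Pinning down that the divisibility over $\ol\BF_p$ is strong enough to annihilate the relevant orbit obstruction for every finite extension, and that it survives the conductor descent, is the delicate point on which the argument turns.
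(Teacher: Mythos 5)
There is a genuine gap, and in fact the proposal heads down a substantially harder road than the paper while leaving its two load-bearing steps unproved. First, the reduction. You assert that the machinery behind Theorem \ref{m}, ``run one index lower,'' reduces the problem to $E_d(R)$-transitivity on $\Um_d(R)$ for all finite extensions $R$. This does not work as stated: in the proof of \ref{c2} the passage modulo $s^2A$ is handled by Bass's theorem \ref{bass} because the rank of $P$ ($=d$) exceeds $\dim A/s^2A=d-1$; one index lower the rank equals that dimension, Bass no longer applies, and the induction does not close. The paper instead quotes a separate, nontrivial reduction theorem (\ref{MKK}, from \cite{K}, specific to $\ol\BF_p$) whose target is only that $R^{d-1}$ be \emph{cancellative} for finite extensions $R$ --- i.e.\ $\GL_d$-transitivity, not $E_d$-transitivity. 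By aiming for $E_d$-transitivity you have made the goal strictly stronger than necessary, which is what forces your problematic ``$\GL_d\to E_d$ upgrade'' later. (Also, for a finite extension $R$ with $\dim R=d-1$ the Bass--Suslin stable range gives transitivity of $E_n$ only for $n\geq\dim R+2=d+1$, so it does not cover $\Um_d(R)$; that case already needs \ref{mmr}, i.e.\ the $\ol\BF_p$ hypothesis, not just stable range.)

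Second, the verification for a $d$-dimensional $R$. Your route --- Fasel--Rao--Swan for normal $R$, then a conductor-square descent from the normalization, then a van der Kallen/divisibility argument to pass from $\GL_d$- to $E_d$-orbits --- consists of two substantial unproved steps, each of which is a real theorem rather than a routine upgrade, and neither of which is needed. The paper avoids both by using the \emph{relative} form of Fasel--Rao--Swan (\ref{7.4}): for $R$ reduced affine of dimension $d\geq 4$ over $\ol k$ with $J$ the ideal of the singular locus, $\GL_d(R)$ acts transitively on $\Um_d(R,J)$ --- no normality hypothesis. One first applies \ref{mmr} to $R/J$ (which has dimension $d-1$, the rank-equals-dimension case where $\ol\BF_p$ is essential) to move $v$ elementarily into $\Um_d(R,J)$, and then \ref{7.4} completes $v$ to an invertible matrix. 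This yields cancellativity of $R^{d-1}$ directly, which is all that \ref{MKK} requires. Your diagnosis that the restriction to $\ol\BF_p$ comes from the conductor descent is therefore also off: in the actual argument it enters through \ref{MKK} and through the use of \ref{mmr} on $R/J$.
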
 

Finally, we will prove the following result (\ref{g5}).  Gubeladze
proved this result (\cite{G2}, \cite{G3}) in case $P$ is free.

\begin{theorem}
Let $M\subset \BQ_+^r$ be a seminormal monoid such that $M\subset
\BQ_+^r$ is an integral extension.  Let $R$ be a ring of dimension $d$
and let $P$ be a projective $R[M]$-module of rank $n$. Then $E(R[M]\op
P)$ acts transitively on $\Um(R[M]\op P)$ whenever $n\geq $ max
$(2,d+1)$.
\end{theorem}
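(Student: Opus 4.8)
\noindent\emph{Proof strategy.}
The plan is to reduce the projective case to Gubeladze's free case and then feed the latter into the cancellation machinery of this note. Write $A=R[M]$ and fix $(a,p)\in\Um(A\op P)$; we must produce an elementary automorphism in $E(A\op P)$ carrying $(a,p)$ to a standard unimodular element. Since $P$ has rank $n\geq\max(2,d+1)$, it is enough to show that the transitivity of the elementary action on unimodular rows of length $n+1$ --- which Gubeladze establishes for the free module $R^{n}$ --- transports to the twisted module $A\op P$.

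\smallskip
First I would pass to a finitely generated monoid. The element $(a,p)$ and the finitely many elementary generators needed to move it involve only finitely many elements of $M$; enlarging to a finitely generated submonoid and replacing it by its seminormalization inside $\BQ_+^{r}$, we may write $M=\bigcup_\alpha M_\alpha$ as a filtered union of finitely generated seminormal monoids with $A=\varinjlim R[M_\alpha]$ (here I use that $M$, being seminormal, contains the seminormalization in $\mathrm{gp}(M)$ of each finitely generated submonoid). As projectivity, unimodularity and membership in an elementary orbit are all finitely presented, the statement descends to some $R[M_\alpha]$, so we may assume $M$ is finitely generated. The hypothesis that $M\subset\BQ_+^{r}$ is integral then forces $\mathrm{gp}(M)$ to have rank $r$ and the normalization of $M$ to be $\BQ_+^{r}$, so that $\dim A=d+r$; note this typically exceeds $n$, so the naive Bass bound $n\geq\dim A+1$ is unavailable and the monoid structure must genuinely be used.

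\smallskip
Next I would invoke the free case. By Gubeladze (\cite{G2},\cite{G3}), $E_{n+1}(A)$ acts transitively on $\Um_{n+1}(A)$ for $n\geq\max(2,d+1)$, which is exactly the theorem for $P$ free; the decisive feature is that the threshold is governed by the coefficient dimension $d$ and not by $\dim A=d+r$, so the monoid variables do not raise it. To upgrade from $R^{n}$ to an arbitrary $P$, I would apply the cancellation criterion of this note (the mechanism behind Theorem \ref{m}, in its general form \ref{c2}): the module $P$ is cancellative as soon as $E_{n+1}(B)$ acts transitively on $\Um_{n+1}(B)$ for every finite extension $B$ of $A$, because over such a $B$ the module $P$ trivializes and the free-case transitivity can be pushed back down to $A\op P$.

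\smallskip
The main obstacle is precisely this last hypothesis: a finite extension $B$ of $R[M]$ need not be a monoid ring, so Gubeladze's theorem does not apply to $B$ on the nose. I would resolve this by working through the integral extension $R[M]\subseteq R[\BQ_+^{r}]$ supplied by the integrality hypothesis: after the reduction above, $R[\ol M]$ is a normal affine (toric) monoid ring of the same coefficient dimension $d$, and the finite extensions arising in the cancellation machine are dominated by rings of the form $R'[\ol M]$ with $R'$ a finite extension of $R$, to which Gubeladze's results apply at the same rank $n\geq\max(2,d+1)$; seminormality of $M$ then lets one descend from $\ol M$ back to $M$. Verifying that every finite extension occurring in \ref{c2} indeed stays within this monoid-ring reach --- rather than drifting into a genuinely non-toric ring of dimension $d+r$, where only $n\geq d+r+1$ would suffice --- is the crux of the argument.
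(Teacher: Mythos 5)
Your proposal correctly identifies the two main ingredients (Gubeladze's free-case transitivity and the patching machinery of this note) and even names the central difficulty --- that a finite extension of $R[M]$ need not be a monoid ring --- but it stops exactly at that difficulty without resolving it, and the resolution is the actual content of the proof. The paper does not route the argument through Theorem \ref{c2} at all: that theorem requires $\mathrm{rank}\,P=\dim A$ and transitivity over \emph{every} finite extension of $A=R[M]$, neither of which is available here. Instead it proves the intermediate Theorem \ref{g4}, which carries the extra hypothesis that $S^{-1}P$ is free for $S$ the set of non-zerodivisors of $R$, and argues by induction on $d=\dim R$ (not on $\dim A$). Because $S^{-1}P$ is free, Lindel's element $s$ of (\ref{3.1}) can be chosen inside $R$ itself; reduction modulo $s^{2}$ then drops $\dim R$ by one (giving the inductive step), and the only ``doubled'' ring over which Lemma \ref{c1} demands transitivity is $A[X]/(X^{2}-s^{2}X)=\bigl(R[X]/(X^{2}-s^{2}X)\bigr)[M]$ --- again a monoid ring over a $d$-dimensional coefficient ring, to which Theorems \ref{g2}/\ref{g3} apply at the same threshold $n+1\geq\max(3,d+2)$. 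This is precisely how one stays ``within monoid-ring reach'': the finite extension is made at the level of the coefficients, so no non-toric ring of dimension $d+r$ ever appears.

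The second missing idea is where seminormality enters. You suggest it is used to descend from the normalization $\ol M$ back to $M$; in fact it is used through Gubeladze's solution of Anderson's conjecture (Theorem \ref{g1}): for seminormal $M$ and PID coefficients, projective modules over the monoid ring are free. Applied to the total quotient ring $S^{-1}R$ of the reduced ring $R$ (a finite product of fields), this shows $S^{-1}P$ is free --- exactly the hypothesis of Theorem \ref{g4}. Without this step you have no control over where Lindel's $s$ lives, and the reduction to monoid rings over $R[X]/(X^{2}-s^{2}X)$ collapses; your assertion that the relevant extensions ``are dominated by rings of the form $R'[\ol M]$'' is the right intuition but is never verified, and it is precisely the point that needs proof. (Your preliminary reduction to finitely generated monoids is not needed and is not made in the paper.) So the proposal is a reasonable map of the landscape, but the two ideas that make the proof work --- freeness of $S^{-1}P$ via Theorem \ref{g1}, and the induction on $\dim R$ through Lemma \ref{c1} with $s\in R$ --- are absent.
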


\section{Preliminaries}

All the rings are assumed to be commutative Noetherian and all the
modules are finitely generated. 

Let $A$ be a ring and let $M$ be an $A$-module. We say that $m\in M$
is {\it unimodular} if there exists $\phi\in M^*$ such that
$\phi(m)=1$. The set of all unimodular elements of $M$ will be
denoted by $\Um(M)$.  We denote by $\Aut_A(M)$, the group of all
$A$-automorphism of $M$.  For an ideal $J$ of $A$, we denote by
$\Aut_A(M,J)$, the kernel of the natural homomorphism
$\Aut_A(M)\rightarrow \Aut_A(M/JM)$.

We denote by $EL^1(A\oplus M,J)$, the subgroup of $\Aut_A(A\oplus M)$
generated by all the automorphisms $\Delta_{a\varphi}=\left(
\begin{smallmatrix}
 1 & a\varphi\\
0 & id_M
\end{smallmatrix}  \right)
$ and $\Gamma_{m}=\left(\begin{smallmatrix}
1&0\\
m&id_M 
\end{smallmatrix}\right)$
with $a\in J,\varphi \in M^*$ and $m \in M$.  We will write $EL^1(A\op
M)$ for $EL^1(A\op M,A)$.

We denote by $\Um^1(A\oplus M, J)$, the set of all $(a,m) \in
\Um(A\oplus M)$ such that $a \in 1 + J$ and by $\Um(A\oplus M,J)$, the
set of all $(a,m) \in \Um^1(A\oplus M,J)$ with $m \in JM$. We will
write $\Um^1_r(A,J)$ for $\Um^1(A\op A^{r-1},J)$ and $\Um_r(A,J)$ for
$\Um(A\op A^{r-1},J)$.

Let $M$ be an $A$-module. Let $p\in M$ and $\varphi \in M^*$ be such
that $\varphi(m)=0$. Let $\varphi_p \in End(M)$ be defined as
$\varphi_p(q)=\varphi(q)p$. Then $1+\varphi_p$ is a unipotent
automorphism of $M$. An automorphism of $M$ of the form $1+\varphi_p$
is called a {\it transvection} of $M$ if either $p\in \Um(M)$ or
$\varphi \in \Um(M^*)$. We denote by $E(M)$, the subgroup of $\Aut(M)$
generated by all the transvections of $M$.

The following result is due to Bak, Basu and Rao (\cite{bbr}, Theorem
3.10). In \cite{AK}, we have proved results for $EL(A\op P)$. Due to
this result, we can use $E(A\op P)$ everywhere.

\begin{theorem}\label{bb}
Let $A$ be a ring and let $P$ be a projective $A$-module of rank $\geq
2$. Then $EL(A\op P)=E(A\op P)$.
\end{theorem}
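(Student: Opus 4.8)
\emph{Proof plan.} The inclusion $EL(A\op P)\subseteq E(A\op P)$ is immediate once the generators of $EL^1(A\op P)$ are recognized as transvections. Writing $e=(1,0)\in A\op P$ and $\pi_A\in(A\op P)^*$ for the projection, one has $\Gamma_m=1+(\pi_A)_{(0,m)}$ with $\pi_A\in\Um((A\op P)^*)$ and $\pi_A(0,m)=0$, while $\Delta_{a\varphi}=1+g_e$ with $g(b,y)=a\varphi(y)$, $e\in\Um(A\op P)$ and $g(e)=0$. Hence every generator is a transvection and $EL(A\op P)\subseteq E(A\op P)$. The whole content of the theorem is the reverse inclusion, namely that each transvection $1+\varphi_p$ lies in $EL(A\op P)$.

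\textbf{Tools.} Two identities carry the argument, neither using the rank. First, the conjugation formula $\sigma(1+\varphi_p)\sigma^{-1}=1+(\varphi\sigma^{-1})_{\sigma(p)}$ for $\sigma\in\Aut(A\op P)$, which moves $p$ and $\varphi$ without leaving the class of transvections. Second, a direct computation gives the commutator identity $[\Delta_{a\varphi},\Gamma_m]=\mathrm{id}_A\op\big(1-(a\varphi)_m\big)$ whenever $\varphi(m)=0$; taking $a=1$ and letting $\varphi$ range over $P^*$, every map $\mathrm{id}_A\op(1+\chi_m)$ with $\chi\in P^*$, $m\in P$ and $\chi(m)=0$ lies in $EL(A\op P)$ (unimodular or not). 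Thus all ``purely $P$'' transvections are already in $EL(A\op P)$.

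\textbf{The adapted cases.} Write $p=(a,x)$ and $\varphi=(f,\psi)$ with $\psi\in P^*$, so $\varphi(p)=af+\psi(x)=0$. If $x\in\Um(P)$, pick $g\in P^*$ with $g(x)=1$; conjugating by a suitable $\Delta_{a\varphi}\in EL$ kills the $A$-component of $p$, replacing it by $(0,x)$, and the resulting transvection factors as $\Gamma_{f'x}\cdot(\mathrm{id}_A\op(1+\psi'_x))$, both factors in $EL$ by the previous paragraph. Dually, if $\psi\in\Um(P^*)$ one uses a $\Gamma$-conjugation to make $f=0$ and factors as $\Delta_{a'\psi}\cdot(\mathrm{id}_A\op(1+\psi_{x'}))\in EL$. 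Passing to the dual module interchanges the $\Gamma$'s and $\Delta$'s, so these two cases are formally dual to one another.

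\textbf{The hard case and the main obstacle.} The genuinely difficult case is a transvection $1+\varphi_p$ in which neither adapted component is unimodular. One cannot reduce this to the adapted cases by elementary moves: the $P$-component of $p$ can only be changed to $\epsilon(x)+am$ with $\epsilon\in E(P)$ and $m\in P$, and examples such as the coordinate ring of a real even sphere with $P$ its tangent bundle show that no such modification need be unimodular. The visible obstruction is the nontrivial diagonal term $1+af=1-\psi(x)$ in the block form of $1+\varphi_p$, whose absorption into the elementary group is a Whitehead/Mennicke-type phenomenon requiring at least three coordinates, i.e. $\mathrm{rank}\,P\geq 2$. My plan is therefore to settle this case locally: membership of a fixed automorphism in $EL(A\op P)$ should be testable maximal ideal by maximal ideal through a local--global (Quillen--Suslin ``dilation'') principle for elementary transvection groups in the spirit of \cite{bbr}, and over a local ring $P$ is free, so $1+\varphi_p$ becomes an ordinary transvection of $(A_\mm)^{n+1}$ with $n+1\geq 3$, which lies in $E_{n+1}(A_\mm)=EL_{n+1}(A_\mm)$ by the classical identity of Suslin and Vaserstein. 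I expect the main obstacle to be precisely the justification of this globalization---establishing the local--global principle for $EL(A\op P)$, which is the heart of \cite{bbr}; the case analysis and the commutator identities above are routine by comparison.
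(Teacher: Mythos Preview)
The paper does not supply its own proof of this statement: it is quoted as Theorem~3.10 of Bak--Basu--Rao \cite{bbr} and used thereafter as a black box. There is therefore no in-paper argument to compare your proposal against.

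That said, your outline is sound and in fact recapitulates the architecture of the Bak--Basu--Rao proof. The easy inclusion $EL(A\oplus P)\subseteq E(A\oplus P)$ and your commutator identity $[\Delta_{a\varphi},\Gamma_m]=\mathrm{id}_A\oplus(1-(a\varphi)_m)$ for $\varphi(m)=0$ are correct, and they dispose of the ``purely $P$'' transvections and the adapted cases just as you describe. Your diagnosis of the hard case is also accurate: absorbing the diagonal unit $1+af$ is the Whitehead/Mennicke step that genuinely needs rank $\geq 2$, and the device you propose---a local--global principle for the elementary transvection group plus the classical free-module identity $E_{n+1}=EL_{n+1}$ over a local ring---is exactly the content of \cite{bbr}. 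Since you already defer the crux to \cite{bbr}, your proposal amounts to the same citation the paper makes, with the routine reductions spelled out; there is no alternative route here to contrast with.
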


\begin{remark}\label{rem}
Using (\ref{bb}), it is easy to see that if $I$ is any ideal of $A$,
then the natural map $E(A\op P) \ra E((A\op P)/I(A\op P))$ is surjective.
\end{remark}

The following is a classical result due to Bass \cite{Bass}.

\begin{theorem}\label{bass}
Let $A$ be a ring and let $P$ be a projective $A$-module of rank
$>\dim A$. Then $E(A\op P)$ acts transitively on $\Um(A\op P)$.
\end{theorem}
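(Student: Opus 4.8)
The plan is to prove the stronger statement that every $(a,p)\in\Um(A\op P)$ lies in the $E(A\op P)$-orbit of the fixed vector $(1,0)$; transitivity then follows at once by composition. By Theorem \ref{bb} we have $E(A\op P)=EL^1(A\op P)$, so it suffices to move $(a,p)$ to $(1,0)$ using the explicit generators $\Gamma_m$ and $\Delta_{b\varphi}$. Fix a unimodular $(a,p)$ and write $ga+\varphi(p)=1$ with $g\in A$ and $\varphi\in P^*$.

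Everything reduces to the following key lemma, whose proof is the only place the rank hypothesis is used: there exists $m\in P$ such that $q:=p+am$ is unimodular in $P$. Granting it, I would finish in three elementary strokes. First, $\Gamma_m$ carries $(a,p)$ to $(a,q)$. Next, choosing $\psi\in P^*$ with $\psi(q)=1$, the transvection $\Delta_{(1-a)\psi}$ carries $(a,q)$ to $(a+(1-a)\psi(q),\,q)=(1,q)$. Finally $\Gamma_{-q}$ carries $(1,q)$ to $(1,0)$. Since $\Gamma_m,\Delta_{(1-a)\psi},\Gamma_{-q}\in EL^1(A\op P)=E(A\op P)$, this exhibits $(a,p)$ and $(1,0)$ in the same orbit.

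The main obstacle is the key lemma, and the hypothesis $\mathrm{rank}\,P>\dim A$ enters exactly here. Let $O_P(p)=\{\theta(p):\theta\in P^*\}$ denote the order ideal of $p$, so that $p$ is basic at a prime $\p$ precisely when $O_P(p)\not\subseteq\p$. The relation $ga+\varphi(p)=1$ gives $O_P(p)+aA=A$, whence the bad locus $Y=\{\p\in\Spec A:O_P(p)\subseteq\p\}$ is contained in the locus where $a$ is invertible. In particular, at every prime of $V(aA)$ the element $p$ is already basic and remains so after adding any element of $aP$, since there $am\in\p P$. Thus the correction is needed only over $\Spec A\setminus V(aA)$, where $a$ is a unit, so adding $am$ imposes no essential constraint beyond the choice of $m$. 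Since $\mathrm{rank}\,P>\dim A\ge\dim A_a$ and $\dim Y\le\dim A$, a general-position argument in the spirit of Serre's splitting theorem — working over the localization $A_a$, where the coset $p+aP$ fills all of $P_a$, and then patching the finitely many primes dimension by dimension via prime avoidance — produces a single $m\in P$ for which $p+am$ is basic at every prime, hence unimodular. Carrying out this dimension count and the clearing of denominators cleanly is the technical heart of the proof; the bound $\mathrm{rank}\,P\ge\dim A+1$ is precisely what keeps each step of the perturbation available.
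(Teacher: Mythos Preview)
The paper does not actually prove this theorem; it is quoted as a classical result of Bass with a bare citation to \cite{Bass}. There is thus no proof in the paper to compare against.

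Your argument is essentially Bass's original one. The three elementary moves $\Gamma_m$, $\Delta_{(1-a)\psi}$, $\Gamma_{-q}$ are correct once the key lemma is in hand, and that lemma---given $(a,p)\in\Um(A\oplus P)$ with $\mathrm{rank}\,P>\dim A$, there exists $m\in P$ with $p+am\in\Um(P)$---is exactly Bass's stable-range/basic-element theorem. Your sketch of it via order ideals and prime avoidance is the right outline, though you leave the inductive dimension-by-dimension step unwritten; this is the genuine content and would need to be carried out (or cited) for a complete proof.

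Two minor remarks. First, your appeal to Theorem~\ref{bb} for $E(A\oplus P)=EL^1(A\oplus P)$ requires $\mathrm{rank}\,P\ge 2$; this is automatic when $\dim A\ge 1$, and the residual case $\dim A=0$, $\mathrm{rank}\,P=1$ is trivial since $A$ is then a finite product of fields. Second, the lemma you need is closer to the Bass (or Eisenbud--Evans) basic-element theorem than to Serre splitting proper, though the underlying prime-avoidance mechanism is the same.
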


The following result is due to Lindel (\cite{L}, Lemma 1.1).

\begin{lemma}\label{3.1}
Let $A$ be a  ring and let $P$ be a projective $A$-module of rank $r$.
Then there exists $s\in A$ such that the following holds:

$(i)$ $P_s$ is free, 

$(ii)$ there exists $p_1,\ldots,p_r\in P$ and
$\phi_1,\ldots,\phi_r\in \Hom(P,A)$ such that
$(\phi_i(p_j))=$ diagonal $(s,\ldots,s)$,

$(iii)$ $sP\subset p_1A+\ldots+ p_rA$,

$(iv)$ the image of $s$ in $A_{red}$ is a non-zero-divisor and

$(v)$ $(0:sA)=(0:s^2A)$.
\end{lemma}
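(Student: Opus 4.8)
The plan is to produce $s$ by trivializing $P$ at all the minimal primes of $A$ simultaneously and then correcting the resulting relations so that they hold integrally. Let $\mathfrak p_1,\ldots,\mathfrak p_k$ be the minimal primes of $A$ and set $T=A\setminus(\mathfrak p_1\cup\cdots\cup\mathfrak p_k)$. The primes of $T^{-1}A$ are exactly the localizations of the $\mathfrak p_i$, so $T^{-1}A$ is semilocal of dimension $0$, and the projective module $T^{-1}P$ of constant rank $r$ is therefore free. First I would choose $p_1,\ldots,p_r\in P$ whose images form a basis of $T^{-1}P$, together with the corresponding dual basis; clearing a common denominator $t\in T$ yields $\phi_1,\ldots,\phi_r\in\Hom(P,A)$ with $\phi_i(p_j)=t\,\delta_{ij}$ in $T^{-1}A$.

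Next I would force these relations to hold in $A$ itself. Each difference $\phi_i(p_j)-t\,\delta_{ij}$ becomes $0$ in $T^{-1}A$, so a single $t'\in T$ annihilates all the finitely many of them; replacing each $\phi_i$ by $t'\phi_i$ and setting $s_0=t't\in T$ gives the exact identity $(\phi_i(p_j))=s_0 I_r$ in $A$. Since $s_0\in T$, its image in $A_{red}$ is a non-zero-divisor, which is $(iv)$. Over $A_{s_0}$ the element $s_0$ is a unit, so the maps $A_{s_0}^r\to P_{s_0}$, $e_j\mapsto p_j$, and $P_{s_0}\to A_{s_0}^r$, $q\mapsto(\phi_i(q))_i$, compose to $s_0 I_r$; hence the first map is a split injection whose complement is projective of rank $0$, therefore zero, and $P_{s_0}$ is free. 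This is $(i)$. The same computation shows $s_0 q=\sum_i\phi_i(q)p_i$ in $P_{s_0}$ for every $q$, and since $P$ is finitely generated this yields $s_0^{\,k+1}P\subseteq p_1A+\cdots+p_rA$ for some fixed $k$.

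Finally I would pass to a suitable power of $s_0$ to secure $(iii)$ and $(v)$ at once. As $A$ is Noetherian, the chain $(0:s_0A)\subseteq(0:s_0^2A)\subseteq\cdots$ stabilizes, say from exponent $n_0$ onward. Choosing $N\geq\max(k+1,n_0)$, I set $s=s_0^{\,N}$ and replace each $\phi_i$ by $s_0^{\,N-1}\phi_i$, which preserves $(\phi_i(p_j))=s\,\delta_{ij}$ and hence $(ii)$; as $A_s=A_{s_0}$ we keep $(i)$, and $(iv)$ is unchanged. Now $sP=s_0^{\,N}P\subseteq s_0^{\,k+1}P\subseteq p_1A+\cdots+p_rA$, giving $(iii)$, while $(0:sA)=(0:s_0^{\,N}A)=(0:s_0^{\,2N}A)=(0:s^2A)$ by the stabilization, giving $(v)$.

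The only genuinely delicate point is the coordination in this last step: the construction only delivers $s_0^{\,k+1}P\subseteq\sum p_iA$ together with an a priori different annihilator-stabilization exponent, so one must select a single power of $s_0$ large enough to settle $(iii)$, $(v)$ and the matrix identity $(ii)$ simultaneously without disturbing $(i)$ and $(iv)$. Everything else is a routine clearing of denominators and a rank count.
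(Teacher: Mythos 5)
Your proof is correct; note that the paper itself offers no proof of this statement, citing it as Lindel's Lemma 1.1, so there is nothing internal to compare against. Your argument is the standard one (and essentially Lindel's): trivialize $P$ over the Artinian ring $T^{-1}A$, where $T$ is the complement of the union of the minimal primes, clear denominators to get the matrix identity $(\phi_i(p_j))=s_0\,\delta_{ij}$ in $A$, and then pass to a single sufficiently large power $s=s_0^N$ (adjusting the $\phi_i$ by $s_0^{N-1}$) so that $(iii)$ and the stabilization $(0:sA)=(0:s^2A)$ hold simultaneously while $(i)$ and $(iv)$ are preserved.
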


The following two results are from (\cite{AK}, Lemma 3.1 and Lemma 3.10).

\begin{lemma}\label{3.10}
Let $A$ be a ring and let $P$ be a projective $A$-module. Let ``bar''
denote reduction modulo the nil radical of $A$. If $E(\ol A\op \ol P)$
acts transitively on $\Um(\ol A\op \ol P)$, then $E( A\op P)$ acts
transitively on $\Um(A\op P)$.
\end{lemma}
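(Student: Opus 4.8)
The plan is to transport the problem to the reduced ring, lift the transporting automorphism, and then kill the remaining nilpotent discrepancy by explicit elementary moves. Write $N$ for the nilradical of $A$, so $\ol A=A/N$ and $\ol P=P/NP$, and note $N(A\op P)=N\op NP$. Given $u,u'\in\Um(A\op P)$, the hypothesis supplies $\ol\tau,\ol{\tau'}\in E(\ol A\op\ol P)$ carrying $\ol u$ and $\ol{u'}$ respectively to $\ol{(1,0)}$; by Remark \ref{rem} each lifts to an element of $E(A\op P)$. Hence it suffices to prove the following: every $v\in\Um(A\op P)$ with $v\equiv(1,0)\bmod N(A\op P)$ lies in the $E(A\op P)$-orbit of $(1,0)$. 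Applying this to $\tau(u)$ and $\tau'(u')$ and composing then puts $u$ and $u'$ in a single orbit.

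So fix $v=(a,p)$ with $a\in 1+N$ and $p\in NP$; since $a=1+(\text{nilpotent})$, it is a unit. First I would apply $\Gamma_{-a^{-1}p}$, which sends $v$ to $(a,0)$ and is a genuine transvection (it equals $1+\pi_{(0,-a^{-1}p)}$ with $\pi$ the unimodular projection $A\op P\ra A$), hence lies in $E(A\op P)$. It then remains to move $(a,0)$ to $(1,0)$. For a triple $c\in N$, $\psi\in P^{*}$, $q\in P$ I would compose $\Gamma_{q}$, $\Delta_{c\psi}$ and $\Gamma_{q'}$ with $q'=-(1+c\psi(q))^{-1}q$; a direct computation shows this product returns the $P$-coordinate to $0$ and multiplies the first coordinate by the unit $1+c\psi(q)$, and crucially the value of $q'$ does not depend on the current first coordinate, so such cycles compose cleanly. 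By Theorem \ref{bb} (using $\operatorname{rank}\geq 2$ to put the factor $\Delta_{c\psi}$ into $E(A\op P)$) each cycle lies in $E(A\op P)$.

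Consequently the first coordinate of $(a,0)$ may be multiplied by any element of the subgroup $G\leq A^{*}$ generated by $\{\,1+c\psi(q):c\in N,\ \psi\in P^{*},\ q\in P\,\}$, and it is enough to show $a^{-1}\in G$, i.e. $1+N\subseteq G$. Here I would use that $P$, having positive rank, is a generator, so its trace ideal is all of $A$: there exist finitely many $\psi_j\in P^{*}$, $q_j\in P$ with $\sum_j\psi_j(q_j)=1$. For $m\in N$ the product $\prod_j\bigl(1+m\psi_j(q_j)\bigr)$ lies in $G$ and equals $1+m$ modulo $N^{2}$. Since $N$ is nilpotent, a descending induction on $i$ (base case $N^{i}=0$) then gives $1+N^{i}\subseteq G$ for every $i$: given $1+m$ with $m\in N^{i}$, the product above agrees with $1+m$ up to an error in $N^{2i}\subseteq N^{i+1}$, which already lies in $G$ by the inductive hypothesis. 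Taking $i=1$ yields $1+N\subseteq G$, so $a^{-1}\in G$ and $(a,0)$ reaches $(1,0)$.

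I expect the single-cycle identity and the first-paragraph reduction to be routine; the main obstacle is the final step, namely converting the additive fact $N=N\cdot\tau(P)$ into the multiplicative statement $1+N\subseteq G$. This is exactly where both hypotheses are forced into play: the passage requires $P$ to be a generator (positive rank, so $\tau(P)=A$) and the nilpotence of $N$ (so that the squaring of the error terminates the approximation in finitely many steps).
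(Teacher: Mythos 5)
Your argument is correct and complete. The paper itself contains no proof of this lemma --- it is imported verbatim from (\cite{AK}, Lemma 3.1) --- so there is nothing internal to compare against; but your route is the natural one: lift the transporting automorphisms via Remark \ref{rem}, reduce to an element $(a,p)$ with $a\in 1+N$ a unit and $p\in NP$, clear the $P$-component with the transvection $\Gamma_{-a^{-1}p}$, and then absorb the unit $a$ using the trace ideal of $P$ together with the nilpotence of $N$ (which holds since $A$ is Noetherian). I checked the two computational points: the cycle $\Gamma_{q'}\Delta_{c\psi}\Gamma_{q}$ with $q'=-(1+c\psi(q))^{-1}q$ does send $(x,0)$ to $\bigl(x(1+c\psi(q)),0\bigr)$ for every $x$, and in the descending induction the error $e$ lies in $N^{2i}\subseteq N^{i+1}$ because $i\geq 1$, with multiplicative correction $1+(1+m)^{-1}e\in 1+N^{i+1}$. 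Two small remarks. First, the trace-ideal step requires $P$ to be a generator, i.e.\ of everywhere positive rank; this is not literally in the statement but is implicit in all of the paper's applications (for $P=0$ the group $E(A)$ is trivial and the lemma fails). Second, you do not actually need Theorem \ref{bb} to place the factors of your cycle in $E(A\op P)$: writing $\pi$ for the projection $A\op P\ra A$, one has $\Gamma_{q}=1+\pi_{(0,q)}$ with $\pi\in\Um((A\op P)^{*})$ and $\Delta_{c\psi}=1+\Psi_{(1,0)}$ with $(1,0)\in\Um(A\op P)$, so both generators are transvections in any rank; invoking \ref{bb} is harmless but imports an unnecessary rank restriction at that step.
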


\begin{lemma}\label{l1}
Let $A$ be a ring and let $P$ be a projective $A$-module of rank
$r$. Choose $s\in A$, $p_1,\ldots,p_r\in P$ and
$\varphi_1,\ldots,\varphi_r\in P^*$ satisfying the properties of
(\ref{3.1}). Let $(a,p)\in \Um(A\op P,sA)$ with $p=c_1p_1+\ldots
+c_rp_r$, where $c_i \in sA$ for $i=1,\ldots,r$. Assume there exists
$\phi\in E_{r+1}^1(R,sA)$ such that
$\phi(a,c_1,\ldots,c_r)=(1,0,\ldots,0)$. Then there exists $\Phi\in
E(A\op P)$ such that $\Phi(a,p)=(1,0)$.
\end{lemma}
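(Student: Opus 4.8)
The plan is to transport each elementary generator appearing in a factorization of $\phi$ from the free module $A^{r+1}=A\oplus A^r$ to a transvection of $A\oplus P$, using the two structure maps supplied by Lemma \ref{3.1}(ii): the map $\rho:A^r\to P$, $(m_1,\ldots,m_r)\mapsto \sum_i m_i p_i$, and the coordinate functionals $\varphi_1,\ldots,\varphi_r\in P^*$ satisfying $\varphi_j(p_i)=s\delta_{ij}$. Under these, the chosen representation $p=\sum_i c_i p_i$ gives $\varphi_j(p)=s c_j$. The invariant I would maintain throughout the argument is that the running element of $A\oplus P$ is always of the form $(a',\sum_i c_i' p_i)$ whenever the corresponding running element of $A^{r+1}$ is $(a',c_1',\ldots,c_r')$.

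First I would fix a factorization $\phi=g_N\cdots g_1$ into generators of $E_{r+1}^1(A,sA)=EL^1(A\oplus A^r,sA)$, each of the form $\Gamma_m$ with $m\in A^r$, or $\Delta_{t\psi}$ with $t\in sA$ and $\psi\in(A^r)^*$. A lower generator $\Gamma_m$, acting by $(x,y)\mapsto(x,y+xm)$, lifts to $\Gamma_{\rho(m)}$ on $A\oplus P$; this is a transvection (with unimodular functional the projection to $A$), hence lies in $E(A\oplus P)$, and it sends $(x,\sum_i c_i'p_i)\mapsto(x,\sum_i (c_i'+x m_i)p_i)$, preserving the invariant. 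An upper generator $\Delta_{t\psi}$ with $\psi=\sum_j b_j e_j^*$ lifts to $\Delta_{t'\Psi}$, where $\Psi:=\sum_j b_j\varphi_j\in P^*$ and $t'\in A$ is any element with $st'=t$; such $t'$ exists precisely because $t\in sA$. This is again a transvection of $A\oplus P$ (with unimodular element $(1,0)$), and since it fixes the $P$-coordinate it trivially preserves the invariant.

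The crucial computation is that on an element $(x,q)$ with $q=\sum_i c_i'p_i$ one has $\Psi(q)=\sum_j b_j\varphi_j(q)=s\sum_j b_j c_j'$, whence $\Delta_{t'\Psi}(x,q)=(x+t'\Psi(q),q)=(x+t\sum_j b_j c_j',\,q)$, which is exactly the first-coordinate change produced by $\Delta_{t\psi}$ on $(x,c_1',\ldots,c_r')$; note that only $st'=t$ is used, so the choice of $t'$ is immaterial. Setting $\Phi:=\widetilde g_N\cdots\widetilde g_1$ with $\widetilde g$ the lift of $g$, an induction on $k$ then shows that applying $\widetilde g_1,\ldots,\widetilde g_k$ to $(a,p)$ yields $(a^{(k)},\sum_i c_i^{(k)}p_i)$, where $(a^{(k)},c^{(k)})$ is the result of applying $g_1,\ldots,g_k$ to $(a,c_1,\ldots,c_r)$. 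Taking $k=N$ and using $\phi(a,c_1,\ldots,c_r)=(1,0,\ldots,0)$ gives $\Phi(a,p)=(1,\sum_i 0\cdot p_i)=(1,0)$, as required, and $\Phi\in E(A\oplus P)$ since it is a product of transvections.

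The main obstacle, and really the only substantive point, is the treatment of the upper generators: their lift requires dividing the scalar by $s$, which is legitimate exactly because the hypothesis places $\phi$ in the \emph{relative} group $E_{r+1}^1(A,sA)$, forcing $t\in sA$. This is where the relativeness of the elementary action is indispensable; the lower generators carry over with no restriction because $\rho(m)\in P$ for every $m\in A^r$. Everything else is the bookkeeping of checking that both lifted generator types preserve the representation $q=\sum_i c_i'p_i$, which reduces entirely to the single identity $\varphi_j(p_i)=s\delta_{ij}$ of Lemma \ref{3.1}(ii).
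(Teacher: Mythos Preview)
Your argument is correct. Note, however, that the present paper does not supply its own proof of this lemma: it is quoted from \cite{AK} (as Lemma~3.10 there) without argument, so there is no in-paper proof to compare against. Your approach---factoring $\phi$ into generators of $EL^1(A\oplus A^r,sA)$ and lifting $\Gamma_m\mapsto\Gamma_{\rho(m)}$, $\Delta_{t\psi}\mapsto\Delta_{t'\Psi}$ with $\Psi=\sum b_j\varphi_j$ and $st'=t$, then checking the invariant $q=\sum c_i'p_i$ is preserved via $\varphi_j(p_i)=s\delta_{ij}$---is precisely the standard Lindel-type transfer, and is essentially how the cited reference proceeds. Your observation that the relativity hypothesis $t\in sA$ is exactly what is needed to absorb the extra factor of $s$ produced by $\Psi$ is the heart of the matter, and you have identified it correctly.
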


The following result is due to Mohan Kumar, Murthy and Roy (\cite{MMR},
Theorem 2.4).

\begin{theorem}\label{mmr}
Let $A$ be an affine algebra of dimension $d\geq 2$ over $\ol \BF_p$. Let
$P$ be a projective $A$-module of rank $d$. Then $E(A\op P)$ acts
transitively on $\Um(A\op P)$.
\end{theorem}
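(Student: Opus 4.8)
The plan is to derive this as a consequence of the main result, Theorem \ref{m}, together with Vaserstein's theorem \cite{V}. Applying Theorem \ref{m} to the given $A$ and $P$, it suffices to verify its hypothesis: that for every finite extension $R$ of $A$, the group $E_{d+1}(R)$ acts transitively on $\Um_{d+1}(R)$. Since $A$ is an affine $\ol\BF_p$-algebra and $R$ is finite over $A$, the ring $R$ is again an affine algebra over $\ol\BF_p$, of dimension $\dim R\le d$. So the whole problem reduces to proving that $E_{d+1}(R)$ acts transitively on $\Um_{d+1}(R)$ for an arbitrary affine $\ol\BF_p$-algebra $R$ of dimension at most $d$.

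If $\dim R<d$, the free module $R^{d}$ has rank $d>\dim R$, so Theorem \ref{bass} already gives that $E_{d+1}(R)=E(R\op R^{d})$ acts transitively on $\Um_{d+1}(R)=\Um(R\op R^{d})$. Hence I may assume $\dim R=d$. The point now is that $R$ is \emph{defined over a finite field}: being finitely generated as an $\ol\BF_p$-algebra, there are a finite subfield $\BF_{q_0}\subseteq\ol\BF_p$ and an affine $\BF_{q_0}$-algebra $R_0$ of dimension $d$ with $R\cong R_0\otimes_{\BF_{q_0}}\ol\BF_p$. Writing $\ol\BF_p$ as the filtered union of its finite subfields $\BF_q\supseteq\BF_{q_0}$ gives $R=\varinjlim_q R_q$ with $R_q:=R_0\otimes_{\BF_{q_0}}\BF_q$. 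Each $R_q$ is an affine algebra over a finite field, hence a finitely generated $\BZ$-algebra, and base change along the finite extension $\BF_q/\BF_{q_0}$ preserves Krull dimension, so $\dim R_q=d$. Vaserstein's theorem \cite{V} therefore applies to each $R_q$ and shows that $E_{d+1}(R_q)$ acts transitively on $\Um_{d+1}(R_q)$.

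It remains to pass to the limit. Given $v\in\Um_{d+1}(R)$, the finitely many coordinates of $v$ together with a unimodularity witness (finitely many $w_i\in R$ with $\sum_i w_iv_i=1$) lie in some $R_q$; enlarging $q$ we may assume $v\in\Um_{d+1}(R_q)$. By the previous paragraph there is $\tau\in E_{d+1}(R_q)$ with $\tau v=e_1$, and the image of $\tau$ in $E_{d+1}(R)$ carries $v$ to $e_1$. Thus $E_{d+1}(R)$ is transitive on $\Um_{d+1}(R)$, the hypothesis of Theorem \ref{m} holds for every finite extension $R$ of $A$, and we conclude that $E(A\op P)$ acts transitively on $\Um(A\op P)$. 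The step I expect to require the most care is the descent to a finite field: one must check that $R$ is genuinely extended from a finitely generated $\BF_{q_0}$-algebra and that Krull dimension is preserved under $A\subseteq R$ and under the base changes $R_0\otimes_{\BF_{q_0}}\BF_q$, so that Vaserstein's dimension hypothesis is met at every stage; granting the standard fact that forming $\Um_{d+1}$ and $E_{d+1}$ commutes with filtered colimits, the limit argument itself is routine.
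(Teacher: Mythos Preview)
The paper does not actually prove Theorem~\ref{mmr}; it is quoted in the preliminaries as a result of Mohan Kumar, Murthy and Roy \cite{MMR}. What the paper \emph{does} say, in the introduction, is that Theorem~\ref{m} combined with Vaserstein's theorem \cite{V} yields an alternative proof of the Mohan Kumar--Murthy--Roy result --- but that remark is phrased for affine algebras over $\BZ$, where Vaserstein applies directly to every finite extension with no further work.

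Your proposal is exactly this program carried out in the $\ol\BF_p$ setting, and you correctly supply the one extra step the paper's sketch glosses over: an affine $\ol\BF_p$-algebra is not itself finitely generated over $\BZ$, so Vaserstein cannot be applied to $R$ directly. Your descent to a model $R_0$ over a finite field $\BF_{q_0}$ and the filtered-colimit argument over finite subfields are correct and standard; Krull dimension is preserved under these base changes (e.g.\ via Noether normalization), each $R_q$ is genuinely affine over $\BZ$ of dimension $d$, and both $\Um_{d+1}(-)$ and $E_{d+1}(-)$ commute with filtered colimits of rings, so the passage to the limit is routine. The case $\dim R<d$ is handled cleanly by~\ref{bass}, and there is no circularity since the proof of Theorem~\ref{c2} and its lemmas nowhere invoke~\ref{mmr}.

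In short: your argument is a correct fleshing-out of the one-line sketch in the introduction, with the descent-to-finite-fields step being a genuine (and necessary) addition that the paper does not make explicit for the $\ol\BF_p$ case.
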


The following result (\cite{K}, Theorem 3.8) is very crucial for the
proof of (\ref{c3}).

\begin{theorem}\label{MKK}
Let $A$ be an affine algebra of dimension $d$ over $\ol
\BF_p$. Assume that if $R$ is a finite extension of $A$ then
$R^{d-1}$ is cancellative. Then every projective $A$-module of rank
$d-1$ is cancellative.
\end{theorem}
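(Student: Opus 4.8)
The plan is to prove the stronger statement that $E(A\oplus P)$ acts transitively on $\Um(A\oplus P)$, which implies that $P$ is cancellative. By Lemma \ref{3.10} I may assume that $A$ is reduced. Fix $(a,p)\in\Um(A\oplus P)$; the goal is to produce $\Phi\in E(A\oplus P)$ with $\Phi(a,p)=(1,0)$. Applying Lindel's Lemma \ref{3.1} to $P$, of rank $r=d-1$, I obtain $s\in A$, elements $p_1,\dots,p_{d-1}\in P$ and $\phi_1,\dots,\phi_{d-1}\in P^*$ with $(\phi_i(p_j))=\mathrm{diag}(s,\dots,s)$, such that $P_s$ is free, $sP\subset\sum_i p_iA$, the image of $s$ in $A_{red}$ is a nonzerodivisor, and $(0:sA)=(0:s^2A)$.

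The heart of the argument is a reduction modulo $sA$, and this is the step that forces the ground field to be $\ol\BF_p$. Put $\ol A=A/sA$ and $\ol P=P/sP$, a projective $\ol A$-module of rank $d-1$; since $s$ is a nonzerodivisor in $A_{red}=A$, one has $\dim\ol A\leq d-1$. If $\dim\ol A<d-1$ then $\mathrm{rank}\,\ol P>\dim\ol A$ and Bass's Theorem \ref{bass} applies; if $\dim\ol A=d-1$ (so $d\geq 3$, the only substantial case) then $\ol P$ has rank equal to $\dim\ol A$, and here I would invoke the Mohan Kumar--Murthy--Roy Theorem \ref{mmr}, which is available precisely because $\ol A$ is affine over $\ol\BF_p$. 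In either case $E(\ol A\oplus\ol P)$ acts transitively on $\Um(\ol A\oplus\ol P)$, so some $\ol\Phi$ carries $(\ol a,\ol p)$ to $(\ol 1,\ol 0)$; lifting $\ol\Phi$ through the surjection of Remark \ref{rem} and applying it to $(a,p)$, I reduce to the case $(a,p)\in\Um(A\oplus P,sA)$, that is $a\in 1+sA$ and $p\in sP$. By contrast, in Theorem \ref{m} the rank of $P$ equals $\dim A$, so the analogous reduction lands strictly below the rank and Bass alone suffices over an arbitrary ring; the appearance of the top-rank case here is exactly what confines the present statement to $\ol\BF_p$.

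Next I would convert the projective problem into a free one. Using $p\in sP$ together with properties (iii) and (v) of Lemma \ref{3.1} --- concretely the identity $sx=\sum_j\phi_j(x)p_j$, valid because $s$ is a nonzerodivisor on $P$, with property (v) absorbing the passage between $s$ and $s^2$ --- one writes $p=c_1p_1+\cdots+c_{d-1}p_{d-1}$ with every $c_i\in sA$. Then $(a,c_1,\dots,c_{d-1})\in\Um^1_d(A,sA)$, and Lemma \ref{l1} reduces the whole problem to finding $\phi\in E_d^1(A,sA)$ with $\phi(a,c_1,\dots,c_{d-1})=(1,0,\dots,0)$: any such $\phi$ automatically produces the required $\Phi\in E(A\oplus P)$. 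Thus everything comes down to the elementary completion, relative to the ideal $sA$, of a single unimodular row of length $d$ over $A$.

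This final, purely free, step is the main obstacle, and it is where the hypothesis on finite extensions is fully used. Taking the trivial extension $R=A$, cancellativity of $A^{d-1}$ already yields an element of $\GL_d(A)$ carrying the row to $(1,0,\dots,0)$; the real difficulty is to promote this to a reduction by the relative elementary group $E_d(A,sA)$. I would attack this through the relative elementary/excision formalism together with a Mennicke-symbol computation: the obstruction to elementarity is a symbol attached to the row, and the reason the hypothesis ranges over \emph{all} finite extensions $R$ of $A$ is that this symbol can be trivialized after passing to a suitable finite integral extension, over which the corresponding row is completable by assumption, and then descended to $A$ by a transfer argument, using that $\ol\BF_p$ is perfect. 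Verifying that this descent is compatible with the congruence condition modulo $sA$ is the delicate point; granting it, Lemma \ref{l1} closes the argument, while every earlier step is a formal application of Lemmas \ref{3.1}, \ref{3.10}, \ref{l1}, Remark \ref{rem}, and Theorems \ref{bass} and \ref{mmr}.
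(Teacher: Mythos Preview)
The paper does not contain a proof of this statement: Theorem~\ref{MKK} is quoted verbatim from \cite{K}, Theorem~3.8, as a preliminary input to the proof of Theorem~\ref{c3}. There is therefore no ``paper's own proof'' to compare against; what can be said is whether your argument stands on its own.

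It does not. You set out to prove the strictly stronger assertion that $E(A\oplus P)$ acts transitively on $\Um(A\oplus P)$, while the hypothesis only provides, for each finite extension $R$ of $A$, that $\GL_d(R)$ acts transitively on $\Um_d(R)$. The machinery you invoke from the present paper --- Lemma~\ref{l1} in particular --- requires an element of the relative \emph{elementary} group $E_d^1(A,sA)$, and your last paragraph is precisely where the argument collapses: you correctly identify that promoting a $\GL_d(A)$-completion to an $E_d^1(A,sA)$-completion is ``the real difficulty,'' and then propose to handle it by an unspecified Mennicke-symbol and transfer computation, explicitly writing ``granting it.'' That is the entire content of the theorem; everything before it is the routine Lindel--Bass--lift framework of \S3, which works equally well over any base and does not use the hypothesis on finite extensions at all (beyond $R=A$). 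Your suggestion that the symbol can be killed over a finite extension and descended does not obviously go through: cancellativity of $R^{d-1}$ says nothing about $SK_1$ or about Mennicke symbols for rows of length $d$, and a transfer/corestriction argument would need a norm map on the relevant obstruction group together with a reason why its composite with restriction is trivializing --- none of which follows from the stated hypothesis.

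In short, the reduction steps are fine, but the step that actually uses the hypothesis is missing, and there is no indication that the stronger $E$-transitivity you aim for is even true under these assumptions. The result in \cite{K} is proved for cancellativity (i.e.\ $\Aut(A\oplus P)$-transitivity), and the argument there uses features specific to $\ol\BF_p$ that are not captured by the Lindel-style template of the present paper.
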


We end this section with a result  due to Fasel, Rao and
Swan (\cite{frs}, Corollary 7.4).

\begin{proposition}\label{7.4}
Let $R$ be an affine algebra of dimension $d\geq 4$ over an
algebraically closed field $\ol k$. Assume that if characteristic of
$\ol k$ is $p>0$, then $p\geq d$. Let $J$ be the ideal defining the
singular locus of $R$. Then $\GL_d(R)$ acts transitively on $\Um_d(R,J)$.
\end{proposition}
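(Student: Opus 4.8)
The plan is to show that every relative unimodular row $v=(a_1,\dots,a_d)\in\Um_d(R,J)$ lies in the $\GL_d(R)$-orbit of $e_1=(1,0,\dots,0)$. The structural observation to exploit at the outset is that, since $a_1\in 1+J$ and $a_2,\dots,a_d\in J$, the rows $v$ and $e_1$ become equal modulo $J$; that is, they restrict to the same section on the closed subscheme $V(J)=\Spec(R/J)$, which by hypothesis is exactly the singular locus of $\Spec R$. First I would pass to an orbit space. Because $d\geq 4\geq 3$, the relevant relative orbit set carries van der Kallen's abelian group structure (the inequality $d\le 2d-3$, i.e.\ $d\geq 3$, is what this requires), so $v$ determines a well-defined class $[v]$ in $\Um_d(R,J)/E_d(R)$. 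Since $E_d(R)\subseteq\GL_d(R)$, the vanishing $[v]=0$ would already give $v\sim_{E_d(R)}e_1$, hence the asserted $\GL_d(R)$-transitivity. Thus the whole problem is reduced to a single vanishing statement.

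Next I would give this obstruction a geometric model. Viewing $v$ as a nowhere-vanishing section of the trivial rank-$d$ bundle on $\Spec R$ that coincides with $e_1$ along $V(J)$, its class $[v]$ is the top completion (Euler-type) obstruction of the relative configuration. The key point is that $v-e_1$ vanishes on $V(J)$, so this class is morally supported on the open complement $U=\Spec R\setminus V(J)$, where $R$ is regular. Consequently the entire obstruction is captured by the smooth affine scheme $U$ of dimension $d$ over $\ol k$, and I would invoke the Fasel--Rao--Swan identification of $[v]$ with a class in the top Chow--Witt group $\widetilde{\mathrm{CH}}^{\,d}$ (equivalently its Suslin-matrix / elementary-symplectic-Witt-group avatar, via the generalized Vaserstein symbol), an identification valid precisely because $U$ is smooth affine of the right dimension.

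The vanishing then comes from the arithmetic of the base field. Over the algebraically closed field $\ol k$ the target group is divisible enough that Suslin's factorial device applies: the class $[v]$ admits division by $(d-1)!$, and the hypothesis $p\geq d$ (or $\mathrm{char}\,\ol k=0$) is exactly what guarantees that $(d-1)!$ is invertible in $\ol k$ — every prime factor of $(d-1)!$ is strictly less than $d\leq p$ — so the division is legitimate and forces $[v]=0$. Feeding this back through the reduction of the first paragraph produces an element of $\GL_d(R)$ carrying $v$ to $e_1$, which is the claim.

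The main obstacle — and the reason the statement is imported wholesale from \cite{frs} rather than derived from the machinery of this section — is the cohomological identification of the middle paragraph: proving that the relative completion obstruction is \emph{faithfully} detected by the top Chow--Witt group requires the injectivity of the generalized Vaserstein/Suslin symbol in this range, which rests on Morel's $\BA^1$-homotopy computation of the relevant homotopy sheaves of spheres. The elementary inputs available here, namely Bass's stability theorem \ref{bass} and the $\ol\BF_p$-specific transitivity of \ref{mmr}, cover only the stable range or the single field $\ol\BF_p$ and cannot substitute for this computation over a general algebraically closed $\ol k$; establishing that identification is the genuine technical heart of the cited work.
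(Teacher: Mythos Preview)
The paper does not prove this proposition at all: it is stated in the preliminaries section as ``a result due to Fasel, Rao and Swan (\cite{frs}, Corollary 7.4)'' and is used purely as a black box in the proof of Theorem~\ref{c3}. You correctly recognise this in your final paragraph, and your overall stance --- that the statement must be imported wholesale because its proof rests on the $\BA^1$-homotopic and Chow--Witt machinery of \cite{frs} which lies entirely outside the elementary toolkit of the present paper --- is exactly right.

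Your sketch of the Fasel--Rao--Swan strategy is in the right spirit (van der Kallen's group structure on orbit sets, the Suslin $(d-1)!$ phenomenon, divisibility of the relevant cohomological invariant over an algebraically closed field, and the role of the hypothesis $p\geq d$ in making $(d-1)!$ a unit), but a few details are loose. The orbit set you want is really governed by the relative elementary group, not $E_d(R)$ acting on $\Um_d(R,J)$ as written; and the logic of the factorial step runs the other way from how you phrase it: Suslin's classical result says that the $(d-1)!$-th ``power'' of any class is completable, while the Fasel--Rao--Swan contribution is to show, via the Grothendieck--Witt/Chow--Witt identification on the smooth locus, that over $\ol k$ every class is a $(d-1)!$-th power, whence it is itself completable. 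These are refinements rather than errors of approach, and since there is no in-paper argument to compare against, your acknowledgment that the result is an external input is the correct answer here.
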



\section{Main Theorem} 

In this section, we prove our main result. 

Let $A$ be a ring and $I$ an ideal of $A$. For an integer $n\geq 3$,
define $E_n(I)$ as the subgroup of $E_n(A)$ generated by
$E_{ij}(a)=Id+ae_{ij}$, where $a\in I$, $1\leq i\ne j\leq n$ and only
non-zero entry of the matrix $ae_{ij}$ is $a$ at the $(i,j)$th place.

Consider the cartesian square
$$\xymatrix{ A(I)\ar [r]^{p_1} \ar [d]_{p_2} & A
\ar [d]^{j_1} \\ A \ar [r]_{j_2} & A/I }$$

The relative group $E_n(A,I)$ is defined in \cite{S} by the exact
sequence $$1\ra E_n(A,I)\ra E_n(A(I))\Lby {E_n(p_1)} E_n(A) \ra 1$$
and it is shown (\cite{S}, Proposition 2.2) that $E_n(A,I)$ is
isomorphic to the kernel of the natural map $E_n(A)\ra E_n(A/I)$.
Further, $E_n(A,I)$ is the normal closure of $E_n(I)$ in $E_n(A)$
\cite{Bass1}.

\begin{lemma}\label{P1}
Let $R$ be a ring and $I$ an ideal of $R$. If $n\geq 3$, then
$E_n(R,I^2)\subset E_n(I)$.
\end{lemma}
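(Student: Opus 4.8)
The plan is to work with the description of $E_n(R,I^2)$ as the normal closure of $E_n(I^2)$ in $E_n(R)$ (equivalently, the kernel of $E_n(R)\ra E_n(R/I^2)$), so that it is generated by the conjugates $\sigma E_{ij}(x)\sigma^{-1}$ with $x\in I^2$, $\sigma\in E_n(R)$ and $i\ne j$. It therefore suffices to place each such generator in $E_n(I)$. The naive inductive attempt---peeling off one elementary factor of $\sigma$ at a time---breaks down immediately, since $E_n(I)$ is \emph{not} normalized by the generators $E_{kl}(c)$ of $E_n(R)$ (for instance $E_{ji}(c)E_{ij}(a)E_{ji}(-c)$ need not lie in $E_n(I)$ already for $a\in I$). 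This is exactly the point where the hypothesis $I^2$, rather than $I$, must be used.

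The key idea is to convert an entry from $I^2$ into a commutator of entries from $I$. Fix a third index $k\ne i,j$, available because $n\ge 3$. Writing $x=\sum_t y_t z_t$ with $y_t,z_t\in I$ and using the Steinberg relation $E_{ij}(y_t z_t)=[E_{ik}(y_t),E_{kj}(z_t)]$, we get $E_{ij}(x)=\prod_t [E_{ik}(y_t),E_{kj}(z_t)]$. Since conjugation by $\sigma$ is an automorphism, $\sigma E_{ij}(x)\sigma^{-1}=\prod_t [\sigma E_{ik}(y_t)\sigma^{-1},\,\sigma E_{kj}(z_t)\sigma^{-1}]$. Each factor $\sigma E_{ik}(y_t)\sigma^{-1}$ and $\sigma E_{kj}(z_t)\sigma^{-1}$ lies in $E_n(R,I)$, because $y_t,z_t\in I$ and $E_n(R,I)$, being the kernel of $E_n(R)\ra E_n(R/I)$, is normal in $E_n(R)$. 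Hence every generator of $E_n(R,I^2)$ lies in the commutator subgroup $[E_n(R,I),E_n(R,I)]$, and the lemma is reduced to the inclusion $[E_n(R,I),E_n(R,I)]\subseteq E_n(I)$.

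This last inclusion is the heart of the argument and the step I expect to be the main obstacle. I would establish it in two stages: first that $E_n(I)$ is normal in $E_n(R,I)$, and then that the quotient $E_n(R,I)/E_n(I)$ is abelian; together these give $[E_n(R,I),E_n(R,I)]\subseteq E_n(I)$. Granting normality, the commutator identities $[ab,c]=a[b,c]a^{-1}[a,c]$ and $[a,bc]=[a,b]\,b[a,c]b^{-1}$ reduce the second stage to checking that each basic commutator $[\sigma E_{pq}(u)\sigma^{-1},\tau E_{rs}(v)\tau^{-1}]$ with $u,v\in I$ lies in $E_n(I)$. Both claims are handled by the relative Steinberg calculus---writing conjugates of elementary matrices in terms of elementary matrices with entries in $I$ and collapsing commutators via the Steinberg relations---which is available for $n\ge 3$. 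Keeping every intermediate entry inside $I$ throughout this bookkeeping is the only genuinely delicate part of the proof; the reduction preceding it is purely structural.
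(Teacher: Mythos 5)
Your key identity --- writing $E_{ij}(x)=\prod_t[E_{ik}(y_t),E_{kj}(z_t)]$ for $x=\sum_t y_tz_t\in I^2$ using a third index, which is where $n\geq 3$ enters --- is exactly the identity at the heart of the paper's proof, and your reduction of the lemma to the inclusion $[E_n(R,I),E_n(R,I)]\subseteq E_n(I)$ is logically valid. The trouble is that this inclusion is precisely where your argument stops, and it is not a smaller or more routine statement than the lemma: by your own reduction $E_n(R,I^2)\subseteq [E_n(R,I),E_n(R,I)]$, so the inclusion you still owe is at least as strong as the lemma itself, and relative commutator formulas of this shape in the literature are normally \emph{derived from} $E_n(R,I^2)\subseteq E_n(I)$ rather than used to prove it. Nor is your two-stage plan obviously executable by ``Steinberg calculus keeping entries in $I$'': already the normality of $E_n(I)$ in $E_n(R,I)$ requires showing that $\sigma E_{kl}(b)\sigma^{-1}\in E_n(I)$ for $b\in I$ and $\sigma=E_{ji}(r)E_{ij}(a)E_{ji}(-r)$ with $a\in I$ but $r\in R$ arbitrary; the conjugating element has entries outside $I$, so there is no a priori reason the intermediate entries stay in $I$. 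As written, the proposal replaces the lemma by an unproved statement that is no easier.

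The paper avoids this by never conjugating by anything more complicated than a single elementary matrix $\alpha=E_{kl}(z')$. When $(k,l)\neq(j,i)$, a direct computation with the relations $e_{ij}e_{kl}=\delta_{jk}e_{il}$ shows that $\alpha E_{ij}(z)\alpha^{-1}$ is a product of at most two elementary matrices with entries in $zR\subseteq I$, so this case needs only $z\in I$. Only in the opposite-root case $(k,l)=(j,i)$ does the paper invoke your commutator decomposition of $E_{ij}(z)$ for $z\in I^2$, and then each factor $\alpha E_{ir}(a_t)\alpha^{-1}$ and $\alpha E_{rj}(b_t)\alpha^{-1}$ falls under the first (non-opposite) case, hence lies in $E_n(I)$, and a commutator of elements of $E_n(I)$ is trivially in $E_n(I)$. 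If you want to salvage your structure, the missing ingredient is the standard fact (valid for $n\geq 3$) that $E_n(R,J)$ is generated by the single conjugates $E_{kl}(r)E_{ij}(z)E_{kl}(-r)$ with $z\in J$, $r\in R$; applying this with $J=I^2$ and then running the paper's case analysis closes the argument, whereas the passage through $[E_n(R,I),E_n(R,I)]$ does not.
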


\begin{proof} 
It is enough to show that if $\beta = E_{ij}(z)\in E_n(I^2)$ and
$\alpha = E_{kl}(z')\in E_n(R)$ then $\alpha\beta\alpha^{-1}\in
E_n(I)$. Note that $e_{ij}e_{kl}=e_{il}$ if $j=k$ and $0$ if $j\ne k$.

First assume that $i\ne l$ and hence $(i,j)\neq (l,k)$.  Then
$\alpha\beta\alpha^{-1} = (Id+z'e_{kl})(Id+ze_{ij})(Id-z'e_{kl}) =
(Id+z'e_{kl}+ze_{ij})(Id-z'e_{kl})=Id+z'e_{kl}+ze_{ij}-
z'e_{kl}-z'^2e_{kl}e_{kl}- z'ze_{ij}e_{kl}
=Id+ze_{ij}-z'ze_{ij}e_{kl}$.  If $j=k$, then $\alpha\beta\alpha^{-1}
=Id+ze_{ij}-z'ze_{il}=(Id+ze_{ij})(Id-z'ze_{il}) \in E(I)$. Further, if
$j\ne k$, then $\alpha\beta\alpha^{-1} =I+ze_{ij} \in E(I)$. This
proves that if $i\ne l$ then $\alpha\beta\alpha^{-1} \in E(I)$.
Similarly we can prove that if $j\ne k$ then $\alpha\beta\alpha^{-1}
\in E(I)$.

Now assume that $(i,j)=(l,k)$. Choose
$r\leq n$ different from $i,j$ and write $z=
a_1b_1+a_2b_2+\ldots+a_sb_s$, where $a_i,b_i\in I$. Now we can write
$$\beta = E_{ij}(z) = \prod_{t=1}^s E_{ij}(a_tb_t)=
\prod_{t=1}^{s}[E_{ir}(a_t),E_{rj}(b_t)]$$
and
$$\alpha\beta\alpha^{-1} = \prod_{t=1}^{s}[\alpha
E_{ir}(a_t)\alpha^{-1}, \alpha E_{rj}(b_t)\alpha^{-1}]\in E_n(I)$$
$\hfill \square$
\end{proof}

\begin{lemma}\label{P2}
Let $R$ be a ring and $I$ an ideal of $R$. If $n\geq 3$, then
$E_n(I)\subset E_n^1(R,I)$ and hence $E_n(R,I^2)\subset E_{n}^1(R, I)$.
\end{lemma}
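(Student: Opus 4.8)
The plan is to reduce the first inclusion to a statement about the generators $E_{ij}(a)$, with $a\in I$ and $i\ne j$, of $E_n(I)$, and to exhibit each such generator inside $E_n^1(R,I)$. Recall that, following the pattern of $EL^1(A\op M,J)$ with $M=R^{n-1}$, the group $E_n^1(R,I)$ is the subgroup of $E_n(R)$ generated by the top-row elementaries $E_{1j}(a)$ with $a\in I$ together with the first-column elementaries $E_{i1}(r)$ with \emph{arbitrary} $r\in R$. The asymmetry between these two families---entries are forced to lie in $I$ only in the top row, whereas the first column is unrestricted---is exactly what must be exploited.

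First I would dispose of the two easy cases. If $i=1$, then $E_{1j}(a)$ with $a\in I$ is by definition one of the generators of $E_n^1(R,I)$. If $j=1$, then $E_{i1}(a)$ with $a\in I\subset R$ is a first-column elementary with entry in $R$, hence again a generator. This already settles every $E_{ij}(a)$ in which one of the two indices equals $1$.

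The one genuine point is the case $i\ne 1$ and $j\ne 1$. Here I would invoke the Steinberg commutator relation $[E_{ik}(x),E_{kj}(y)]=E_{ij}(xy)$ for distinct indices $i,k,j$, applied with $k=1$. Since $n\ge 3$ and $i,j\ne 1$, $i\ne j$, the three indices $i,1,j$ are distinct, so one obtains
$$E_{ij}(a)=[E_{i1}(1),E_{1j}(a)].$$
The crucial choice is to place the unit $1\in R$ in the first-column factor $E_{i1}(1)$, which is legitimate because that family is unrestricted, and to place the element $a\in I$ in the top-row factor $E_{1j}(a)$. Both factors are generators of $E_n^1(R,I)$, so their commutator lies in the subgroup $E_n^1(R,I)$. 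The naive alternative $[E_{i1}(a),E_{1j}(1)]$ fails, since $E_{1j}(1)$ would require $1\in I$; this is precisely the main (and essentially only) obstacle---spotting the commutator in which the $I$-restriction lands on the top row, where it is permitted, and the unit lands in the unrestricted first column. This completes the proof that $E_n(I)\subset E_n^1(R,I)$.

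The ``hence'' is then immediate. By Lemma~\ref{P1} we have $E_n(R,I^2)\subset E_n(I)$, and combining this with the inclusion just established gives $E_n(R,I^2)\subset E_n(I)\subset E_n^1(R,I)$. Once the correct commutator identity is written down, the remaining verifications are a routine bookkeeping of Steinberg relations.
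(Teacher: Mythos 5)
Your proof is correct and is essentially identical to the paper's: the same two easy cases when an index equals $1$, and for $i,j\neq 1$ the same identity $E_{ij}(a)=E_{i1}(1)E_{1j}(a)E_{i1}(-1)E_{1j}(-a)$, which is exactly the commutator you write, with the $I$-entry placed in the top row and the unit in the first column. The deduction of the second inclusion from Lemma~\ref{P1} also matches the paper's intent.
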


\begin{proof}
 Let $E_{ij}(x)\in E_{n}(I)$, where $x\in I$. If $i=1$ or $j=1$, then
$E_{ij}(x)\in E_n^1(R,I)$. Assume $i\ne 1$ and $j\ne 1$.  Then $E_{ij}(x) =
E_{i1}(1)E_{1j}(x)E_{i1}(-1)E_{1j}(-x)\in E_{n}^1(R, I)$.  $\hfill
\square$
\end{proof}

\begin{lemma}\label{c1}
Let $A$ be a ring and let $P$ be a projective $A$-module of rank $r$.
Choose $s\in A$ satisfying the conditions in (\ref{3.1}). Assume that
if $R=A[X]/(X^2-s^2X)$ then $E_{r+1}(R)$ acts
transitively on $\Um_{r+1}(R)$. Then $E(A\op P)$ acts transitively on
$\Um(A\op P,s^2A)$.
\end{lemma}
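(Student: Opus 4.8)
The plan is to translate the statement into a question about the single unimodular row attached to $(a,p)$ by the Lindel data of \ref{3.1}, and then to manufacture the required relative elementary matrix by going up to the finite extension $R=A[X]/(X^2-s^2X)$ and specialising. Given $(a,p)\in\Um(A\oplus P,s^2A)$, so $a\in 1+s^2A$ and $p\in s^2P$, write $p=s^2q$. From (ii), (v) and the projectivity of $P$ one obtains the identity $\sum_i\varphi_i(u)p_i=su$ for all $u\in sP$; applying it to $u=sq$ gives $p=\sum_i c_ip_i$ with $c_i=s\varphi_i(q)\in sA$. Thus the row $v:=(a,c_1,\dots,c_r)$ lies in $\Um_{r+1}(A)$ with $a\in 1+s^2A$ and $c_i\in sA$, so $v$ agrees with $e_1:=(1,0,\dots,0)$ modulo $sA$. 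By \ref{l1} it then suffices to find $\phi\in E_{r+1}^1(A,sA)$ with $\phi(v)=e_1$; for then $(a,p)$ lies in the $E(A\oplus P)$-orbit of $(1,0)$, and since $(1,0)\in\Um(A\oplus P,s^2A)$ this yields the asserted transitivity.

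To produce $\phi$ I would exploit the two $A$-algebra retractions $\epsilon,\delta\colon R\to A$ sending the class $x$ of $X$ to $0$ and to $s^2$ respectively, both split by the inclusion $\iota\colon A\hookrightarrow R$; note $\delta\equiv\epsilon\pmod{s^2A}$. Writing $a=1+s^2\alpha$, set $w:=(1+\alpha x,\,c_1,\dots,c_r)\in R^{r+1}$, so that $\epsilon(w)=v':=(1,c_1,\dots,c_r)$ and $\delta(w)=v$. Because $x(x-s^2)=0$, every prime of $R$ contains $x$ or $x-s^2$; as $\epsilon(w)=v'$ and $\delta(w)=v$ are both unimodular over $A$, the row $w$ is unimodular over $R$. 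By hypothesis $E_{r+1}(R)$ acts transitively on $\Um_{r+1}(R)$, so there is $\Theta\in E_{r+1}(R)$ with $\Theta\,e_1=w$; applying $\epsilon$ and $\delta$ gives $\epsilon(\Theta)e_1=v'$ and $\delta(\Theta)e_1=v$ in $E_{r+1}(A)$.

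Now put $\Psi:=\delta(\Theta)\,\epsilon(\Theta)^{-1}\in E_{r+1}(A)$, so that $\Psi v'=\delta(\Theta)e_1=v$. Setting $\Theta_0:=\Theta\,\iota(\epsilon(\Theta))^{-1}$ one has $\epsilon(\Theta_0)=I$, and since $\delta\iota=\mathrm{id}$ one gets $\Psi=\delta(\Theta_0)$. As $\Theta_0\in\ker E_{r+1}(\epsilon)$, and $\delta$ sends each generator $E_{ij}(vx)$ of this kernel to $E_{ij}(vs^2)\in E_{r+1}(s^2A)$ while commuting with conjugation by $\iota(E_{r+1}(A))$ via $\delta\iota=\mathrm{id}$, the element $\Psi$ lands in the normal closure of $E_{r+1}(s^2A)$, i.e. in the relative group $E_{r+1}(A,s^2A)$ (this is precisely the mechanism encoded in the definition of $E_{r+1}(A,s^2A)$ via the identification of $R$ with the double $A(s^2A)$). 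By \ref{P1} with $I=sA$ we have $E_{r+1}(A,s^2A)\subset E_{r+1}(sA)$, and by \ref{P2} $E_{r+1}(sA)\subset E_{r+1}^1(A,sA)$; hence $\Psi\in E_{r+1}^1(A,sA)$. Finally, since $v'$ has first coordinate $1$, the column operations $\eta:=\prod_{i\ge 2}E_{i1}(-v'_i)\in E_{r+1}^1(A,sA)$ satisfy $\eta v'=e_1$, so $\phi:=\eta\Psi^{-1}\in E_{r+1}^1(A,sA)$ gives $\phi v=\eta\Psi^{-1}v=\eta v'=e_1$, which is exactly what \ref{l1} requires.

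The delicate point, and the reason for the particular choice of $R$, is the asymmetry $a-1\in s^2A$ against $c_i\in sA$: a twist by $x$ can be inserted only in the first coordinate, which is why $\epsilon(w)=v'$ is forced to be a unit-leading row rather than $e_1$ itself, and why $v'$ must then be cleared by column operations. The genuine obstacle is to verify that $\Psi=\delta(\Theta)\epsilon(\Theta)^{-1}$ falls into $E_{r+1}^1(A,sA)$ rather than merely into $E_{r+1}(A)$; this is where the two augmentations of $R=A[X]/(X^2-s^2X)$ differing exactly by $s^2$, together with \ref{P1} and \ref{P2}, are indispensable. If $s^2$ is not a non-zero-divisor the identification $R\cong A(s^2A)$ holds only up to a square-zero ideal, but the generator-level computation of $\delta$ on $\ker E_{r+1}(\epsilon)$ used above still applies verbatim, so no reduction modulo the nilradical is actually needed.
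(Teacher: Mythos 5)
Your proof is correct and follows essentially the same route as the paper: lift the data to $R=A[X]/(X^2-s^2X)$ (the double of $A$ along $s^2A$), apply the transitivity hypothesis over $R$, compare the two projections to $A$ to produce an element of $E_{r+1}(A,s^2A)$, and funnel it through (\ref{P1}), (\ref{P2}) and (\ref{l1}). The only differences are cosmetic: you patch $(a,c_1,\dots,c_r)$ against $(1,c_1,\dots,c_r)$ rather than against $e_1$, so you skip the paper's preliminary normalisation $p\mapsto p-ap$ at the cost of a final clearing step $\eta$, and working directly with the two retractions $\epsilon,\delta$ of $A[X]/(X^2-s^2X)$ (rather than identifying it with the fiber product) lets you bypass the paper's opening reduction to the reduced case.
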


\begin{proof}
Without loss of generality, we may assume that $A$ is reduced.
By
(\ref{3.1}), there exist $p_1,\ldots,p_r\in P$ and
$\phi_1,\ldots,\phi_r \in \Hom(P,A)$ such that $P_s$ is free,
$(\phi_i(p_j))=$ diagonal $(s,\ldots,s)$, $sP\subset p_1A+\ldots+
p_rA$ and $s$ is a non-zerodivisor.

Let $(a,p)\in \Um(A\op P,s^2A)$. Replacing $p$ by $p-ap$, we may assume
that $p\in s^3P$. Since $sP\subset \sum_1^r Ap_i$, we get
$p=f_1p_1+\ldots +f_rp_r$ for some $f_i \in s^2A$. Note that
$v=(a,f_1,\ldots,f_r) \in \Um_{r+1}(A,s^2A)$.

Consider the following cartesian square
$$\xymatrix{ R\ar [r]^{p_1} \ar [d]_{p_2} & A
\ar [d]^{j_1} \\ A \ar [r]_{j_2} & A/(s^2) }$$

Patching unimodular rows $(a,f_1,\ldots,f_r)$ and $(1,0,\ldots,0)$
over $A/s^2A$, we get a unimodular row $(c_0,c_1,\ldots,c_r)\in
\Um_{r+1}(R)$. Since $E_{r+1}(R)$ acts transitively on $\Um_{r+1}(R)$,
there exists $\Theta \in E_{r+1}(R)$ such that
$(c_0,c_1,\ldots,c_r)\Theta=(1,0,\ldots,0)$. The projections of this
equation gives
$$(f,f_1,\ldots,f_r)\Psi=(1,0,\ldots,0)\;\text{ and}\;\;
(1,0,\ldots,0)\wt \Psi=(1,0,\ldots,0)$$ 
where
$\Psi,\wt \Psi\in E_{r+1}(A)$ such that $\Psi=\wt\Psi$ modulo
$(s^2)$.  Hence $(f,f_1,\ldots,f_r)\Psi\,\wt\Psi^{-1}=(1,0,\ldots,0)$,
where $\Psi\wt\Psi^{-1}=\Delta\in E_{r+1}(A,s^2A)$.

By (\ref{P2}), $\Delta \in E_{r+1}^1(A,sA)$. Hence by (\ref{l1}),
there exists  $\Theta \in E(A\op P)$ such that
$(a,p)\Theta=(1,0)$. This completes the proof.  $\hfill \square$
\end{proof}

\begin{theorem}\label{c2}
Let $A$ be a ring of dimension $d$ and let $P$ be a projective
$A$-module of rank $d$.  Assume that if $R$ is a finite extension of
$A$ then $E_{d+1}(R)$ acts transitively on $\Um_{d+1}(R)$. Then
$E(A\op P)$ acts transitively on $\Um(A\op P)$.
\end{theorem}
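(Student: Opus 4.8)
The plan is to reduce the global statement about $\Um(A\oplus P)$ to the relative, localized statement already proved in Lemma \ref{c1}, and then to remove the restriction to the relative unimodular set $\Um(A\oplus P,s^2A)$ by a standard localization-patching argument over the ring $A$. First I would invoke Lemma \ref{3.10} to assume $A$ is reduced, so that the element $s$ produced by Lemma \ref{3.1} is a non-zerodivisor. Next I would fix an arbitrary $(a,p)\in\Um(A\oplus P)$ and choose $s\in A$ as in Lemma \ref{3.1}; the goal is to transform $(a,p)$ by an element of $E(A\oplus P)$ into $(1,0)$.

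The crucial hypothesis-checking step is to verify that Lemma \ref{c1} applies, i.e.\ that $R=A[X]/(X^2-s^2X)$ is a finite extension of $A$ so that by assumption $E_{r+1}(R)$ acts transitively on $\Um_{r+1}(R)$. Here $R$ is a free $A$-module of rank $2$, hence a finite extension, so the hypothesis of Lemma \ref{c1} is met and $E(A\oplus P)$ acts transitively on $\Um(A\oplus P,s^2A)$. The remaining task is to move a general unimodular element into the relative set $\Um(A\oplus P,s^2A)$ modulo the action of $E(A\oplus P)$. I would work with the two comaximal localizations determined by $s$: inverting $s$ makes $P_s$ free of rank $d$, so over $A_s$ the module $A_s\oplus P_s\cong A_s^{d+1}$ has unimodular rows on which $E_{d+1}(A_s)$ acts transitively because $\dim A_s<d+1$ (one uses Theorem \ref{bass}); working modulo $s^2$ drops the dimension, again letting Theorem \ref{bass} apply over $A/s^2A$.

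The standard mechanism is then to use these two transitivity statements to replace $(a,p)$, up to the action of $E(A\oplus P)$, by an element congruent to $(1,0)$ modulo $s^2A$, i.e.\ an element of $\Um(A\oplus P,s^2A)$; Remark \ref{rem} (surjectivity of $E(A\oplus P)\ra E((A\oplus P)/I(A\oplus P))$) is what allows an elementary automorphism chosen modulo $s^2A$ or after inverting $s$ to be lifted to a genuine element of $E(A\oplus P)$. Once $(a,p)$ has been brought into $\Um(A\oplus P,s^2A)$, Lemma \ref{c1} finishes the argument by carrying it to $(1,0)$.

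I expect the main obstacle to be the localization-patching step that brings an arbitrary unimodular element into $\Um(A\oplus P,s^2A)$: one must carefully coordinate the elementary action over $A_s$ (using freeness of $P_s$ and the dimension drop) with the action modulo $s^2$, and then descend the patched automorphism to an honest element of $E(A\oplus P)$ via Remark \ref{rem} and Theorem \ref{bb}. The congruence bookkeeping between the automorphism chosen after inverting $s$ and the one chosen modulo $s^2A$—ensuring they agree on the overlap so as to glue to a single global elementary automorphism—is the delicate part, whereas the final application of Lemma \ref{c1} is immediate.
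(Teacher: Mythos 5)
Your two main ingredients are the right ones and match the paper: you correctly observe that $R=A[X]/(X^2-s^2X)$ is a finite extension of $A$ (free of rank $2$), so the hypothesis hands you transitivity of $E_{d+1}(R)$ on $\Um_{d+1}(R)$ and Lemma \ref{c1} gives transitivity of $E(A\op P)$ on $\Um(A\op P,s^2A)$; and you correctly identify that reduction modulo $s^2$ drops the dimension so that Theorem \ref{bass} applies there. But the ``localization-patching'' step you flag as the delicate heart of the argument is both unnecessary and, as stated, flawed. The reduction modulo $s^2A$ alone already does the whole job: since the image of $s$ in $A_{red}$ is a non-zerodivisor (Lemma \ref{3.1}(iv)), $\dim A/s^2A\leq d-1<d=\mathrm{rank}\,P$, so Bass gives $\sigma\in E(\ol A\op\ol P)$ with $(\ol a,\ol p)\sigma=(1,0)$; lifting $\sigma$ to $\theta\in E(A\op P)$ by Remark \ref{rem}, the element $(a,p)\theta$ is automatically congruent to $(1,0)$ modulo $s^2A$, i.e.\ lies in $\Um(A\op P,s^2A)$, and Lemma \ref{c1} finishes. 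No gluing of an automorphism over $A_s$ with one over $A/s^2A$ is required, and there is no congruence bookkeeping to do. This is exactly the paper's proof.

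The flawed part of your detour is the claim that $E_{d+1}(A_s)$ acts transitively on $\Um_{d+1}(A_s)$ ``because $\dim A_s<d+1$.'' Theorem \ref{bass} applied to $A_s\op A_s^{d}$ requires $\mathrm{rank}=d>\dim A_s$, and inverting a non-zerodivisor need not drop the dimension ($\dim A_s$ can still equal $d$, e.g.\ $A=k[x,y]$, $s=x$). A unimodular row of length $d+1$ over a $d$-dimensional ring is precisely the borderline case that Bass does not cover and that the hypothesis on finite extensions is there to handle. Fortunately this step can simply be deleted.
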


\begin{proof}
Let $(a,p)\in \Um(A\op P)$.  Choose $s\in A$ satisfying the conditions
in (\ref{3.1}). Let ``bar'' denote reduction modulo $s^2A$. Since $\dim
\ol A=d-1$, by (\ref{bass}), there exists $\sigma \in E(\ol A\op \ol
P)$ such that $(\ol a,\ol p)\sigma=(1,0)$. By (\ref{rem}), we can lift
$\sigma$ to $\theta\in E(A\op P)$. If $(a,p)\theta=(b,q)$, then
$(b,q)\in \Um(A\op P,s^2A)$. By (\ref{c1}), there exists $\theta_1\in
E(A\op P)$ such that $(b,q)\theta_1=(a,p)\gt \gt_1=(1,0)$. This proves
the result.  $\hfill \square$
\end{proof}

\begin{theorem}\label{c3}
Let $A$ be an affine algebra of dimension $d\geq 4$ over the field $\ol
\BF_p$, where $p\geq d$. Let $P$ be a projective $A$-module of rank
$d-1$. Then $P$ is cancellative.
\end{theorem}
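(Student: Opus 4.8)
The plan is to feed everything into Theorem \ref{MKK}: by that theorem it suffices to prove that for \emph{every} finite extension $R$ of $A$ the free module $R^{d-1}$ is cancellative, i.e. (in the convention of the Introduction) that $\GL_d(R)$ acts transitively on $\Um_d(R)$. Since $R$ is module-finite over the affine $\ol\BF_p$-algebra $A$, it is itself an affine $\ol\BF_p$-algebra of dimension $d$, so the standing hypotheses $d\geq 4$ and $p\geq d$ pass to $R$. Using that a unimodular row having a unit coordinate is elementarily equivalent to $e_1$ (via the Whitehead element $\mathrm{diag}(u,u^{-1},1,\dots,1)\in E_d(R)$), one checks that transitivity over $R_{red}$ lifts to transitivity over $R$, in the spirit of Lemma \ref{3.10}; hence I would first reduce to the case where $R$ is reduced.

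Next let $J$ be the ideal defining the singular locus of $R$. Because $\ol\BF_p$ is perfect, a reduced affine $\ol\BF_p$-algebra is generically smooth, so $J$ is contained in no minimal prime and $\dim R/J\leq d-1$. The key step is to move an arbitrary $v\in\Um_d(R)$ into the relative orbit $\Um_d(R,J)$ by working modulo $J$. Over $R/J$ the group $E_d$ acts transitively on $\Um_d(R/J)$: if $\dim R/J\leq d-2$ this is Bass' Theorem \ref{bass}, since the rank $d-1$ then strictly exceeds the dimension; and in the borderline case $\dim R/J=d-1$ it is precisely the Mohan Kumar--Murthy--Roy Theorem \ref{mmr}, applied to the affine $\ol\BF_p$-algebra $R/J$ of dimension $d-1\geq 3$ with rows of length $d=(d-1)+1$. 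Either way there is $\ol\sigma\in E_d(R/J)$ with $\ol v\,\ol\sigma=e_1$; lifting $\ol\sigma$ to $\sigma\in E_d(R)\subseteq\GL_d(R)$ (elementary generators lift by lifting their entries) gives $v\sigma\equiv e_1\pmod J$, that is, $v\sigma\in\Um_d(R,J)$.

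Now Proposition \ref{7.4} applies directly to $R$, which is affine of dimension $d\geq 4$ over the algebraically closed field $\ol\BF_p$ with $p\geq d$: it gives $\tau\in\GL_d(R)$ with $v\sigma\tau=e_1$. Hence every $v\in\Um_d(R)$ lies in the $\GL_d(R)$-orbit of $e_1$, so $\GL_d(R)$ acts transitively on $\Um_d(R)$ and $R^{d-1}$ is cancellative. Since this holds for all finite extensions $R$ of $A$, Theorem \ref{MKK} yields the cancellativity of $P$.

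I expect the main obstacle to be exactly the passage from the \emph{relative} transitivity provided by Proposition \ref{7.4} (which only governs rows congruent to $e_1$ along the singular locus) to transitivity on all of $\Um_d(R)$. This forces one to understand $\Um_d(R/J)$, and the genuinely delicate point is the borderline case $\dim R/J=d-1$, where Bass' numerical bound fails and one must invoke Theorem \ref{mmr}. Because that theorem is special to the base field $\ol\BF_p$, this is the single step that obstructs extending the argument to an arbitrary algebraically closed field, which is why the statement is restricted to $\ol k=\ol\BF_p$.
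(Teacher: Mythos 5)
Your proposal is correct and follows essentially the same route as the paper: reduce via Theorem \ref{MKK} to cancellation of $R^{d-1}$ for affine $\ol\BF_p$-algebras $R$ of dimension $d$, pass to the reduced case, use Theorem \ref{mmr} (or Bass) modulo the singular ideal $J$ to move $v$ into $\Um_d(R,J)$, and finish with Proposition \ref{7.4}. Your explicit case split on $\dim R/J$ is in fact slightly more careful than the paper, which simply asserts $\dim R/J=d-1$.
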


\begin{proof}
By (\ref{MKK}), it is enough to show that if $R$ is any affine algebra
of dimension $d$ over $\ol \BF_p$, then $R^{d-1}$ is cancellative.
Let $v\in \Um_d(R)$ be any unimodular row of length $d$. It is enough
to show that there exists $\sigma \in \GL_d(R)$ such that
$v\sigma=e_1=(1,0,\ldots,0)$. Without loss of generality, we may
assume that $ R$ is reduced.

Let $J$ be the ideal of $R$ defining the singular locus of $R$. Since
$R$ is reduced, height of $J$ is $\geq 1$. Let ``bar'' denote
reduction modulo $J$. Then $\dim \ol R=d-1$. By (\ref{mmr}), there
exits $\sigma\in E_d(\ol R)$ such that $\ol v \sigma =e_1$.  By
(\ref{rem}), we can lift $\sigma$ to $\theta \in E_d(R)$.  We have
$v\theta=e_1$ modulo $J$. Applying (\ref{7.4}), we get $\gt\in
\GL_d(R)$ such that $v\sigma \gt =e_1$. Hence  $v$ is completable to an
invertible matrix, i.e. $R^{d-1}$ is cancellative. This
completes the proof. $\hfill \square$
\end{proof}

\section{Extension of Gubeladze's results}

In this section we extend some results of Gubeladze. We begin by
recalling three results due to Gubeladze \cite{G1}, (\cite{G2},
Theorem 8.1) and (\cite{G3}, Theorem 10.1) respectively. See \cite{G3}
for the definition of a monoid $M$ of $\Phi$-simplicial growth.

\begin{theorem}\label{g1}
Let $M$ be a commutative torsion-free seminormal and cancellative
monoid.  Then for any principal ideal domain $R$, projective modules
over $R[M]$ are free.
\end{theorem}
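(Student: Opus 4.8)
The plan is to follow Gubeladze's method: reduce the statement to a finitely generated monoid, reformulate freeness as the assertion that every projective module is \emph{extended} from $R$, and then establish that extension property by a homotopy argument driven by the dilation endomorphisms of $M$, with seminormality supplying exactly the vanishing of the obstructions that make the homotopy work.

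First I would reduce to the affine case. Any projective $R[M]$-module $P$ is finitely presented, hence extended from $R[M_0]$ for some finitely generated submonoid $M_0\subseteq M$; replacing $M_0$ by its seminormalization computed inside the group completion of $M$ keeps it finitely generated and contained in $M$, so we may assume $M$ is affine, torsion-free, cancellative and seminormal. Splitting off the unit group $M^*$ (a free abelian group, over whose group ring $R[M^*]$ -- a Laurent polynomial ring over the PID $R$ -- projectives are already known to be free) reduces us to the pointed case $M^*=0$. For pointed $M$ there is an augmentation $\varepsilon\colon R[M]\ra R$ with kernel $I_M=\myoplus_{m\neq 0}R\,X^m$; since $R$ is a PID, it suffices to prove that $P$ is extended along $\varepsilon$, i.e. $P\cong (P/I_MP)\ot_R R[M]$, because then $P/I_MP$ is free over $R$ and hence $P$ is free.

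The heart of the argument is this extension property, and here I would use the dilation endomorphisms $\iota_c\colon M\ra M$, $m\mapsto cm$ for $c\geq 2$, which induce $R$-algebra endomorphisms of $R[M]$ compatible with $\varepsilon$. Gubeladze's key point, and the place where seminormality enters decisively, is a homotopy invariance: the dilations contract $M$ onto its vertex, and after interpolating between $P$ and a pullback $\iota_c^*P$ by a projective module over an auxiliary monoid ring interpolating the two maps, one can run a monoid-ring version of Quillen patching together with the Horrocks/Quillen--Suslin mechanism to conclude $P\cong\varepsilon^*\varepsilon_*P$. Seminormality is precisely the hypothesis guaranteeing that the Picard group and unit obstructions arising in this interpolation vanish (for non-seminormal $M$ they need not, and the conclusion fails), so that the homotopy can be carried out.

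The interpolation only works once the combinatorics of the cone $\BR_+M$ is under control, and this is organized by induction on the rank of $M$, peeling off the facets of the cone: one restricts $P$ to the facial submonoids and to the localization along the interior and reassembles the pieces by a Milnor/Mayer--Vietoris patching, gluing with an elementary (hence $K_0$-trivial) automorphism. I expect this last step to be the main obstacle: making the dilation homotopy compatible with the facial decomposition while preserving seminormality of every piece that appears is the delicate technical core of the proof, and it is what upgrades transitivity-of-elementary-action statements of the kind used earlier in this paper (which yield cancellation) to the much stronger conclusion of outright freeness over a principal ideal domain.
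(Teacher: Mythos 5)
The paper does not prove this statement at all: Theorem \ref{g1} is quoted as a known result of Gubeladze (his resolution of Anderson's conjecture, reference \cite{G1}) in the list of recalled facts at the start of Section 4, and is used only as an input to Theorem \ref{g5}. So there is no in-paper proof to compare yours against; for the purposes of this paper the correct move is simply to cite \cite{G1}.

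Taken on its own terms, your proposal is an outline of Gubeladze's strategy rather than a proof. The reductions in your first paragraph --- to a finitely generated monoid, splitting off the unit group, passing to the pointed case, and reformulating freeness as ``$P$ is extended along the augmentation'' --- are sound and standard. But everything after that is asserted rather than established: the existence of the interpolating projective module between $P$ and $\iota_c^*P$, the applicability of a ``monoid-ring version of Quillen patching,'' the claim that seminormality is precisely what kills the Picard and unit obstructions, and the compatibility of the dilation homotopy with the facial decomposition are each substantial theorems in their own right, and you explicitly defer the last of these as ``the delicate technical core.'' Note in particular that Quillen patching in its naive form fails for general monoid rings; controlling that failure via the combinatorial (pyramidal/facial) descent on the cone of $M$ is exactly where the real work of \cite{G1} lies, so invoking patching by name does not close the gap. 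As written, the proposal correctly identifies the landmarks of Gubeladze's argument but does not constitute a proof.
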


\begin{theorem}\label{g2}
Let $R$ be a ring of dimension $d$ and let $M\subset \BQ_+^r$ be a
submonoid such that $M\subset \BQ_+^r$ is an integral extension. Then
$E_n(R[M])$ acts transitively on $\Um_n(R[M])$ whenever $n\geq $ max
$(3,d+2)$.
\end{theorem}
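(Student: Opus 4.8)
The plan is to separate the problem into a ``base-ring direction'' governed by $\dim R$ and a ``monoid direction'' that, as the bound $n\geq\max(3,d+2)$ suggests, should cost nothing. First I would reduce to the case $R$ reduced (nilpotents do not affect transitivity on unimodular rows, since $E_n(R)\ra E_n(R_{red})$ is surjective and a row congruent to $e_1$ modulo the nilradical is easily corrected). Next, the integral-extension hypothesis says precisely that the saturation of $M$ in $\BQ^r$ is all of $\BQ_+^r$ (equivalently, $M$ spans the full orthant cone $\BR_+^r$, so that $N_ie_i\in M$ for suitable integers $N_i$); since any single unimodular row together with a dual vector involves only finitely many monomials, a filtered-colimit argument lets me assume $M$ is finitely generated with $\operatorname{cone}(M)=\BR_+^r$, and after clearing a common denominator I may take $M\subset\BZ_+^r$. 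The monoid $M$ then carries the total-degree grading, so $R[M]$ is positively graded with degree-zero part $R$, augmentation ideal $\mathfrak M=\bigoplus_{m\neq 0}Rt^m$, and a retraction $\epsilon\colon R[M]\ra R$. Applying $\epsilon$ to a given $v\in\Um_n(R[M])$ yields $\epsilon(v)\in\Um_n(R)$, and since $n\geq d+2=\dim R+2$, Bass's theorem (\ref{bass}) produces $\tau\in E_n(R)\subset E_n(R[M])$ with $\epsilon(v)\tau=e_1$. Replacing $v$ by $v\tau$, I may assume $v\equiv e_1$ modulo $\mathfrak M$, so the whole problem becomes the ``relative'' statement: reduce such a $v$ to $e_1$ by elementary operations.

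For the relative statement I would use the grading to build a homotopy. Define the $R[T]$-algebra map $\psi_T\colon R[M]\ra R[M][T]$ by $t^m\mapsto T^{\deg m}t^m$; being a ring homomorphism, it sends the unimodular row $v$ to a unimodular row $v(T):=\psi_T(v)$ over $R[M][T]$, with $v(1)=v$ and, because $v\equiv e_1\bmod\mathfrak M$, with $v(0)=e_1$. Now I would invoke the local--global principle of Suslin and Rao for the elementary orbit of a unimodular row over $B[T]$ (valid for $n\geq 3$): to conclude $v(T)\in e_1E_n(R[M][T])$, hence $v=v(1)\in e_1E_n(R[M])$, it suffices to prove $v(T)\in e_1E_n(R[M]_{\mathfrak m}[T])$ for every maximal ideal $\mathfrak m$ of $R[M]$. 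This is precisely where the number $3$ in $\max(3,d+2)$ enters, and it reduces everything to a local computation.

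The hard part will be this local step, and in particular forcing the bound to depend only on $d=\dim R$ and not on the number $r$ of monoid variables. Over $R[M]_{\mathfrak m}[T]$ a naive application of Bass would require $n\geq\dim R[M]+2$, which can be as large as $d+r+2$; the real content of the theorem is that the $r$ monoid directions behave like polynomial variables and do not raise the stability bound. To achieve this I would first treat the free case $M=\BZ_+^r$, where $R[M]=R[X_1,\ldots,X_r]$ and the dimension-independent bound for unimodular rows over polynomial rings is available through the Suslin--Lindel theory, and then reduce the general finitely generated full-cone monoid to the free case by Gubeladze's monoid-combinatorial descent, writing $M$ as the top of a tower of controlled ``pyramidal'' extensions of a free submonoid and checking that each such extension preserves elementary transitivity. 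Combining the local triviality so obtained with the local--global principle gives $v\in e_1E_n(R[M])$, and together with the Bass reduction in the base direction this yields transitivity exactly for $n\geq\max(3,d+2)$. I expect the monoid descent, especially for non-normal $M$ (which are allowed here, since seminormality is not assumed), to be the most delicate ingredient.
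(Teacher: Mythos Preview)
The paper does not prove this statement at all: Theorem~\ref{g2} is merely \emph{quoted} as a result of Gubeladze (\cite{G2}, Theorem~8.1), recalled alongside (\ref{g1}) and (\ref{g3}) so that it can be fed into (\ref{c1}) and (\ref{g4}). There is therefore no proof in the paper to compare your proposal against; from the paper's point of view this theorem is a black box.

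For what it is worth, your outline is a reasonable high-level sketch of how Gubeladze's argument actually goes: the reduction to $R$ reduced and to a finitely generated $M\subset\BZ_+^r$ with full cone, the graded homotopy $\psi_T$ to pass to a relative problem over $R[M][T]$, the Suslin local--global principle for elementary orbits of unimodular rows (which is exactly where the $n\geq 3$ enters), and the identification of the genuine difficulty as making the local bound depend only on $\dim R$ rather than on $\dim R[M]$. You correctly flag the pyramidal/monoid descent to the free case as the hard ingredient, and you are right that seminormality is not assumed in (\ref{g2}); in this paper seminormality only appears in (\ref{g5}), where it is needed to invoke (\ref{g1}) and guarantee that $S^{-1}P$ is free.
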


\begin{theorem}\label{g3}
Let $R$ be a ring of dimension $d$ and let $M$ be a monoid of
$\Phi$-simplicial growth. Then $E_n(R[M])$ acts transitively on
$\Um_n(R[M])$ whenever $n\geq$ max $(3,d+2)$.
\end{theorem}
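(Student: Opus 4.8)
The plan is to induct on $d=\dim R$, reducing the projective case to Gubeladze's row result \ref{g2} by the same Lindel-localisation mechanism used in \ref{c1} and \ref{c2}. The one genuinely new ingredient is that the Lindel element must be chosen \emph{inside the base ring $R$}, not merely inside $R[M]$: this is what keeps all the auxiliary rings that appear within the range of \ref{g2}. First I would reduce to $R$ reduced. Since $M\subset\BQ_+^r$ is torsion-free and cancellative, the nilradical of $R[M]$ equals $(\mathrm{nil}\,R)[M]$ (compute inside the group ring of the group completion), so $R[M]/\mathrm{nil}(R[M])=R_{red}[M]$, and \ref{3.10} lets me assume $R$ is reduced without changing $d$ or the hypotheses on $M$.

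Next I would relocate the Lindel element into $R$. With $R$ reduced and Noetherian its total quotient ring is a finite product of fields $\prod_i K_i$. Each $K_i$ is a PID, so $P\otimes_R K_i$ is a projective $K_i[M]$-module, hence free of rank $n$ by \ref{g1} (this is exactly where seminormality of $M$ is used). These glue to show $P\otimes_R \mathrm{Frac}(R)$ is free of rank $n$ over $\mathrm{Frac}(R)[M]$, and spreading out gives a nonzerodivisor $s\in R$ with $P_s$ free over $R[M]_s=R_s[M]$. Because $R[M]$ is a free $R$-module, $s$ is a nonzerodivisor in $R[M]$; clearing denominators in a free basis of $P_s$ and replacing $s$ by a suitable power (still in $R$) produces $p_1,\dots,p_n\in P$ and $\varphi_1,\dots,\varphi_n\in P^*$ satisfying all of $(i)$–$(v)$ of \ref{3.1} for $A=R[M]$ with this $s\in R$.

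Now the inductive step. Given $(a,p)\in\Um(R[M]\op P)$, reduce modulo $s^2R[M]$. Since $s\in R$ we have $R[M]/s^2R[M]=(R/s^2R)[M]$, and $\dim(R/s^2R)\le d-1$ (a nonzerodivisor of a reduced ring lies outside every minimal prime, so each prime over $(s)$ has height $\ge 1$). The module $\bar P=P/s^2P$ is projective of rank $n$ over $(R/s^2R)[M]$, and $n\ge\max(2,d+1)\ge\max(2,(d-1)+1)$, so the inductive hypothesis gives transitivity of $E((R/s^2R)[M]\op\bar P)$ on $\Um((R/s^2R)[M]\op\bar P)$; this replaces the appeal to \ref{bass} made in \ref{c2}. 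Thus some $\bar\theta$ sends $(\bar a,\bar p)$ to $(1,0)$; lifting it to $\theta\in E(R[M]\op P)$ by \ref{rem} yields $(a,p)\theta=(b,q)\in\Um(R[M]\op P,\,s^2R[M])$. The base case $d=0$ is immediate: then $R$ is a product of fields, $P$ is free by \ref{g1}, and \ref{g2} applies since $n+1\ge 3$.

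Finally I would move $(b,q)$ to $(1,0)$ via \ref{c1} with $A=R[M]$ and $r=n$, whose hypothesis is transitivity of $E_{n+1}$ on $\Um_{n+1}$ of the ring $R[M][X]/(X^2-s^2X)$. Here the choice $s\in R$ pays off: this ring is $(R[X]/(X^2-s^2X))[M]=R'[M]$ with $R'$ finite free of rank $2$ over $R$, so $\dim R'=d$ and $M\subset\BQ_+^r$ is still an integral extension; hence \ref{g2} applies to $R'[M]$ and supplies the needed transitivity precisely because $n+1\ge\max(3,d+2)$, i.e. $n\ge\max(2,d+1)$. Then \ref{c1} gives $\theta_1$ with $(b,q)\theta_1=(1,0)$, closing the induction. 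The main obstacle is exactly this relocation of the Lindel element: \ref{3.1} only guarantees $s\in R[M]$, for which $R[M][X]/(X^2-s^2X)$ is not a monoid ring and has Krull dimension $d+r$, well outside the range of \ref{g2}; it is the generic-fibre freeness from \ref{g1} that forces $s$ into $R$ and makes the descent go through. The only remaining technical points are the identification $\mathrm{nil}(R[M])=(\mathrm{nil}\,R)[M]$ used above and, when $M$ is not finitely generated, a direct-limit reduction to affine submonoids so that $R[M]$ is Noetherian and $P$ finitely presented.
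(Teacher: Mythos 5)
The statement you were asked about, Theorem \ref{g3}, is not proved in this paper at all: it is one of three results recalled verbatim from Gubeladze ((\cite{G3}, Theorem 10.1)) and used as a black box. Your proposal does not prove it either; what you have written is essentially the paper's own proof of the projective-module generalization (\ref{g4}) combined with (\ref{g5}) --- generic freeness via (\ref{g1}), a Lindel element $s$ chosen inside the base ring $R$, induction on $\dim R$, and the patching lemma (\ref{c1}) applied to $R'[M]$ with $R'=R[X]/(X^2-s^2X)$. That is a correct and well-organized account of \emph{that} argument, but it answers a different question: \ref{g3} concerns $E_n(R[M])$ acting on unimodular rows, and is an input to (\ref{g4}), not a consequence of it.

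Read as a proof of \ref{g3} itself, the proposal has a genuine gap: it is circular. The hypothesis of \ref{g3} is that $M$ is a monoid of $\Phi$-simplicial growth, not that $M\subset\BQ_+^r$ is an integral extension, so the external input you lean on throughout --- Theorem \ref{g2} applied to $K_i[M]$, $B[M]$ and $R'[M]$ --- does not apply to such $M$; the relevant statement for those rings is \ref{g3} itself. Replacing those appeals by ``the inductive hypothesis'' does not rescue the argument, because $R'=R[X]/(X^2-s^2X)$ has the same dimension $d$ as $R$, so you would be invoking free-row transitivity over a dimension-$d$ ring in order to prove it. Gubeladze's proof of \ref{g3} is a genuinely different argument on the monoid side, which is why the paper quotes it rather than reproving it. Separately, your uses of (\ref{g1}) require $M$ to be seminormal, a hypothesis present in (\ref{g5}) but absent from \ref{g3}.
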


We will generalize above results as follows.

\begin{theorem}\label{g4}
Let $M$ be as in (\ref{g2}) or (\ref{g3}). Let $R$ be a ring of
dimension $d$ and let $P$ be a projective $R[M]$-module of rank
$n$. Assume that $S^{-1}P$ is free, where $S$ is the set of
non-zerodivisors of $R$. Then $E(R[M]\op P)$ acts transitively on
$\Um(R[M]\op P)$ whenever $n\geq $ max $(2,d+1)$.
\end{theorem}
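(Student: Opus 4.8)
The plan is to imitate the proof of Theorem~\ref{c2}, but to induct on $d=\dim R$ and to feed Gubeladze's row result into Lemma~\ref{c1} in place of the appeal to Bass's theorem. The whole role of the hypothesis that $S^{-1}P$ be free is to guarantee that the Lindel element of \ref{3.1} can be chosen inside $S\subset R$: only when $s\in R$ does the fibre-product ring $R[M][X]/(X^2-s^2X)$ appearing in \ref{c1} collapse to $B[M]$, where $B=R[X]/(X^2-s^2X)$ is a free $R$-module of rank $2$ and hence a ring of dimension $d$, so that Theorems~\ref{g2} and \ref{g3} apply to it.

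First I would reduce to $R$ reduced by \ref{3.10}. Representing a basis of the free module $S^{-1}P$ by elements $p_1,\dots,p_n\in P$ and clearing denominators, I obtain $s\in S$ together with $\phi_1,\dots,\phi_n\in P^*$ satisfying $(\phi_i(p_j))=sI_n$, $sP\subset\sum p_iR[M]$ and $P_s$ free; since $s\in S$ is a non-zerodivisor of the free $R$-module $R[M]$, the remaining conditions of \ref{3.1} hold automatically. Now, given $(a,p)\in\Um(R[M]\op P)$, reduce modulo $s^2R[M]=(R/s^2R)[M]$, where $\dim(R/s^2R)\le d-1$. The inductive step is to move the reduced pair to $(1,0)$: applying the inductive hypothesis to the reduced ring $R'=R/\sqrt{sR}$ and the module $P'=R'[M]\ot_{R[M]}P$ gives transitivity over $R'[M]$, which \ref{3.10} upgrades to transitivity over $(R/s^2R)[M]$. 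Lifting the resulting automorphism by \ref{rem} produces $(b,q)\in\Um(R[M]\op P,s^2R[M])$, and Lemma~\ref{c1} — whose hypothesis that $E_{n+1}(B[M])$ act transitively on $\Um_{n+1}(B[M])$ holds by \ref{g2}/\ref{g3} because $n+1\ge\max(3,d+2)$ — carries $(b,q)$ to $(1,0)$. The base case $d=0$ is immediate: then $R$ is a finite product of fields, $P=S^{-1}P$ is free, and the statement is exactly \ref{g2}/\ref{g3}.

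The step I expect to be the main obstacle is checking that the inductive hypothesis is genuinely available at $R'$, i.e. that $(S')^{-1}P'$ is free, where $S'$ is the set of non-zerodivisors of $R'$. Here I would exploit that $R'$ is reduced, so that its total quotient ring $(S')^{-1}R'$ is a finite product of the residue fields $\kappa(\mathfrak q)$ at the minimal primes $\mathfrak q$ of $R'$; over each $\kappa(\mathfrak q)[M]$ the module $P'\ot\kappa(\mathfrak q)$ is projective, hence free by Gubeladze's Theorem~\ref{g1} (a field being a principal ideal domain), whence $(S')^{-1}P'$ is free. This is the one place where seminormality of $M$ is used, through \ref{g1}, and it is the mechanism that propagates the freeness hypothesis down the induction. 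The delicate bookkeeping is precisely to route the induction through the reduced ring $R'$ rather than through $R/s^2R$ itself, whose total quotient ring need not be zero-dimensional and over which the freeness is not transparent.
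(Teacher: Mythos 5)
Your argument has exactly the skeleton of the paper's proof of (\ref{g4}): reduce to the reduced case by (\ref{3.10}), induct on $d$, use the freeness of $S^{-1}P$ to place the Lindel element $s$ of (\ref{3.1}) inside $R$ (so that the fibre-product ring of (\ref{c1}) becomes $B[M]$ with $B=R[X]/(X^2-s^2X)$ finite over $R$, whence (\ref{g2})/(\ref{g3}) apply to it), kill $(a,p)$ modulo $s^2$ by induction, lift via (\ref{rem}), and finish with (\ref{c1}); your numerology $n+1\geq\max(3,d+2)$ and your base case $d=0$ agree with the paper's. The one place you depart from the printed proof is also the most interesting one: you insist on verifying that the inductive hypothesis is actually available over $R/s^2R$, i.e.\ that the localization of $\ol P$ at the non-zerodivisors of $R/s^2R$ is free. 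The paper invokes the induction hypothesis at this point without comment, so you have put your finger on a genuine unaddressed step (freeness of $P$ at the minimal primes of $R$ says nothing a priori about $P$ at the primes minimal over $sR$). However, your patch --- passing to $R'=(R/sR)_{red}$ and invoking Gubeladze's Theorem (\ref{g1}) over the residue fields of the minimal primes of $R'$ --- uses the seminormality of $M$, which is a hypothesis of (\ref{g5}) but \emph{not} of (\ref{g4}): the monoids of (\ref{g2}) and (\ref{g3}) need not be seminormal. As written, your argument therefore proves (\ref{g4}) only for seminormal $M$; that is enough to deduce (\ref{g5}), which is the paper's actual application, but it is not a proof of (\ref{g4}) as stated. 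To recover the full statement you would need a seminormality-free reason why the freeness hypothesis propagates down the induction, or a reformulation of the inductive statement that avoids the issue. The surrounding details of your reduction (that $\sqrt{s^2R}=\sqrt{sR}$, that the nilradical of $(R/s^2R)[M]$ equals $\mathrm{nil}(R/s^2R)[M]$ for torsion-free cancellative $M$ so that (\ref{3.10}) applies, and that conditions (iv) and (v) of (\ref{3.1}) are automatic for $s\in S$ once $R$ is reduced) are all correct.
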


\begin{proof}
By (\ref{3.10}), we may assume that the ring $A=R[M]$ is reduced.  We
will use induction on $d$. If $d=0$, then by assumption, projective
modules of constant rank over $R[M]$ are free. Hence we are done by
(\ref{g2}) and (\ref{g3}).

Assume $d>0$. By assumption $S^{-1}P$ is free. We can choose $s\in S$
such that $P_s$ is free and conditions of (\ref{3.1}) are satisfied.

Let $(a,p)\in \Um(A\op P)$ and let ``bar'' denote reduction modulo
$s^2A$. Since $\dim \ol R=d-1$, by induction hypothesis, there exists
$\phi\in E(\ol A\op \ol P)$ such that $(\ol a,\ol p)\phi =(1,0)$. Let
$\Phi \in E(A\op P)$ be a lift of $\phi$, by (\ref{rem}). Then
$(a,p)\Phi \in \Um(A\op P,s^2A)$. By Gubeladze's theorem in the free
case, $E_n(B[M])$ acts transitively on $\Um_n(B[M])$, where
$B=R[X]/(X^2-s^2X)$. Hence by (\ref{c1}), there exists $\Phi_1\in
E(A\op P)$ such that $(a,p)\Phi\Phi_1=(1,0)$. This completes the
proof.  $\hfill \square$
\end{proof}

Using (\ref{g1}, \ref{g4}), we get the following.

\begin{theorem}\label{g5}
Let $M$ be as in (\ref{g2}) or (\ref{g3}). Further assume that $M$ is
seminormal.  Let $R$ be a ring of dimension $d$ and let $P$ be a
projective $R[M]$-module of rank $n$. Then $E(R[M]\op P)$ acts
transitively on $\Um(R[M]\op P)$ whenever $n\geq $ max $(2,d+1)$.
\end{theorem}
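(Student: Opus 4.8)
The plan is to reduce Theorem \ref{g5} to Theorem \ref{g4} by verifying that its hypotheses are satisfied, the only nontrivial one being that $S^{-1}P$ is free, where $S$ denotes the set of non-zerodivisors of $R$. First I would invoke (\ref{3.10}) to reduce to the case where $A = R[M]$ is reduced, which in particular lets me assume $R$ is reduced. The key observation is that $S^{-1}R$ is a finite product of fields (the total quotient ring of the reduced Noetherian ring $R$), so $S^{-1}R$ is a zero-dimensional reduced ring, in fact a product $\prod_i K_i$ of fields $K_i$.

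Next I would analyze the localized monoid ring $S^{-1}(R[M]) = (S^{-1}R)[M]$. Since $S^{-1}R = \prod_i K_i$, this decomposes as $\prod_i K_i[M]$, and $S^{-1}P$ correspondingly decomposes as a product of projective $K_i[M]$-modules. Because $M$ is assumed seminormal (and is torsion-free and cancellative, being a submonoid of $\BQ_+^r$ or a monoid of $\Phi$-simplicial growth), each $K_i$ is a field, hence in particular a principal ideal domain, so Theorem \ref{g1} applies: every projective $K_i[M]$-module is free. Therefore each factor of $S^{-1}P$ is free, and assembling the factors shows $S^{-1}P$ is free over $S^{-1}(R[M])$. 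This is exactly the hypothesis needed in (\ref{g4}).

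With $S^{-1}P$ free established, I would simply apply Theorem \ref{g4} to conclude that $E(R[M]\op P)$ acts transitively on $\Um(R[M]\op P)$ for $n \geq \max(2, d+1)$, which is precisely the desired statement.

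The main obstacle I anticipate is the subtle point about ranks and connectedness: $S^{-1}R = \prod_i K_i$ may have several factors, so freeness of $S^{-1}P$ over $S^{-1}(R[M])$ means freeness over each $K_i[M]$, and one must ensure these free modules have matching rank $n$ across all factors so that $S^{-1}P$ is genuinely free (not merely locally free with varying rank). Since $P$ has constant rank $n$ by hypothesis, the rank is preserved under the localization and under each projection to $K_i[M]$, so the ranks do agree; but this is the step where care is needed, and it is also where the seminormality hypothesis on $M$ is genuinely used via Theorem \ref{g1}.
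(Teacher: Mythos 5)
Your proposal is correct and is essentially the paper's own argument: the paper gives no written proof beyond the line ``Using (\ref{g1}, \ref{g4}), we get the following,'' and the intended reasoning is exactly what you spell out --- reduce to the reduced case, note that $S^{-1}R$ is a finite product of fields, apply Gubeladze's freeness theorem (\ref{g1}) over each field factor to conclude $S^{-1}P$ is free, and then invoke (\ref{g4}). Your extra care about matching ranks across the factors of the total quotient ring is a reasonable point that the paper leaves implicit.
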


{\bf Acknowledgement:} The first author would like to thank Professor
Nikolai Vavilov for useful discussion and second author would like to
thank Professor Bhatwadekar for pointing out a gap in an earlier
version.


{}


\begin{thebibliography}{}

\bibitem{bbr}{} A. Bak, R. Basu and Ravi A. Rao, {\it Local-global
principle for transvection groups}, Proc. Amer. Math. Soc. {\bf 138}
(2010), 1191-1204.

\bibitem{Bass}{} H. Bass, {\it K-theory and stable algebra},
Publ. Math. Inst. Hautes Etudes Sci. {\bf 22} (1964), 5-60.

\bibitem{Bass1}{} H. Bass, {\it Algebraic K-Theory}, W.A. Benjamin,
New York (1968).
\bibitem{B1}{} S.M. Bhatwadekar, {\it Cancellation theorem for
projective modules over a two dimensional ring and its polynomial
extension}, Compositio Math. {\bf 128} (2001), 339-359.

\bibitem{AK}{} A.M. Dhorajia and M.K. Keshari, {\it Projective modules
over overrings of polynomial rings}, J. Algebra {\bf 323} (2010),
551-559.

\bibitem{frs}{} J. Fasel, Ravi A. Rao and R.G. Swan, {\it On stably
free modules over affine algebras}, Preprint.

\bibitem{G1}{} J. Gubeladze, {\it Anderson's conjecture and the
maximal class of monoids over which projective modules are free},
Math. USSR-Sb. {\bf 63} (1988), 165-188.

\bibitem{G2}{} J. Gubeladze, {\it The elementary action on unimodular
rows over a monoid ring}, J. Algebra {\bf 148} (1992), 135-161.

\bibitem{G3}{} J. Gubeladze, {\it The elementary action on unimodular
rows over a monoid ring II}, J. Algebra {\bf 155} (1993), 171-194.

\bibitem{K}{} M.K. Keshari, {\it Cancellation problem for projective
modules over affine algebras}, J. K-Theory {\bf 3} (2009),
561-581.

\bibitem{MMR} N. Mohan Kumar, M.P. Murthy and A. Roy, {\it A
cancellation theorem for projective modules over finitely generated
rings}, in: Hijikata H., et al. (Eds.), Algebraic geometry and
commutative algebra in honor of Masayoshi Nagata, vol. 1, (1987),
281-287.

\bibitem{L}{} H. Lindel, {\it Unimodular elements in projective
modules}, J. Algebra {\bf 172} (1995), 301-319.

\bibitem{S}{} M.R. Stein, {\it Relativizing functors on rings and
algebraic K-theory}, J. Algebra {\bf 19} (1971), 140-152.

\bibitem{V}{} A.A. Suslin and L.N. Vaserstein, {\it Serre's problem
on projective modules over polynomial rings and algebraic $K$-theory},
Math. USSR Izvestija {\bf 10} (1976), 937-1001.

\end{thebibliography}
\end{document}